\documentclass[final,3p,11pt,times,authoryear]{elsarticle}

\usepackage{amsmath,amssymb,amsthm}
\usepackage{booktabs,array,tabularx}
\usepackage{graphicx}
\usepackage{placeins}
\usepackage{algorithm}
\usepackage{algpseudocode}
\usepackage{setspace}

\allowdisplaybreaks[3]

\theoremstyle{definition}
\newtheorem{definition}{Definition}
\theoremstyle{plain}
\newtheorem{theorem}{Theorem}
\newtheorem{proposition}{Proposition}
\newtheorem{lemma}{Lemma}

\theoremstyle{remark}

\begin{document}

\begin{frontmatter}

\title{High-Multiplicity Flexible Job Shops: From Exact Recurrent Fluid Attainment to Structure-Guided Finite-Horizon Scheduling}

\author[sds]{Wenjun Zheng}
\ead{wenjunzheng@link.cuhk.edu.cn}
\author[sds]{Wei Qu}
\ead{weiqu@link.cuhk.edu.cn}
\author[sse]{Weilin Cai}
\ead{weilincai@link.cuhk.edu.cn}
\author[sds,sribd]{Jianfeng Mao\corref{cor1}}
\ead{jfmao@cuhk.edu.cn}
\cortext[cor1]{Corresponding author}

\address[sds]{School of Data Science, The Chinese University of Hong Kong,
Shenzhen (CUHK-Shenzhen), Guangdong 518172, China}
\address[sse]{School of Science and Engineering, The Chinese University of
Hong Kong, Shenzhen (CUHK-Shenzhen), Guangdong 518172, China}
\address[sribd]{Shenzhen Research Institute of Big Data, Guangdong 518172,
China}

\begin{abstract}
High-multiplicity flexible job shops involve many copies of a small set of job
types that must be scheduled on alternative machines. Fluid relaxations
provide scalable workload lower bounds, but their fractional machine
allocations do not define feasible schedules for individual jobs. We show
that, after a suitable finite scaling, an optimal fluid allocation can be
realized exactly by a feasible repeating discrete schedule. The construction
scales the fluid allocation to integer operation counts, places the resulting
operations in nonoverlapping machine intervals, repeats this arrangement, and
links operations across repetitions into individual jobs without moving any
interval, thereby enforcing job precedence while preserving machine
feasibility. For growing finite instances with the same job-type composition,
even with a fixed number of extra jobs, the gap between the optimal makespan
and the fluid lower bound remains bounded by a constant; hence the relative
gap vanishes as the instance grows. Guided by this repeated structure, we
develop type-based cyclic template replay (TCTR), which searches job-type
templates using an optimization model whose size does not grow with the number
of job copies and replays the selected template on the full instance. On 676
multiplicity-expanded public flexible-job-shop instances, TCTR is feasible in
every case and achieves a 1.54\% mean gap to the fluid lower bound, compared
with 5.32\% for a job-indexed adaptive large-neighborhood search and 6.18\%
for a hybrid genetic algorithm.
\end{abstract}

\begin{keyword}
Flexible job shop scheduling \sep High-multiplicity scheduling
\sep Fluid relaxation \sep Cyclic scheduling
\end{keyword}

\end{frontmatter}

\section{Introduction}
\label{sec:introduction}

The flexible job shop scheduling problem (FJSP) assigns each operation to an
eligible machine and sequences the assigned operations subject to job
precedence and machine capacity. It extends the classical job shop to
multipurpose machines \citep{brucker1991,dauzere1997,hurink1994} and remains
strongly NP-hard through the job-shop special case \citep{garey1976}; recent
reviews summarize the broad range of routing and sequencing models that have
grown around this structure \citep{dauzere2024,jiang2023}. In many production
systems, however, the number of distinct job types is small, while many copies
of each type must be processed. This repeated structure motivates
high-multiplicity scheduling, where job types are described once and demand is
encoded by their multiplicities rather than by an explicit list of jobs
\citep{boudoukh2001identical,bertsimas1999asymptotic}.

With many repeated jobs, it is natural to first characterize the system at an
aggregate workload level before constructing a detailed schedule for every job
copy. Fluid relaxations provide such a view and have long been used in
job-shop scheduling to derive machine-load lower bounds and guide large-volume
scheduling
\citep{bertsimas1999asymptotic,bertsimas2002fluid,
nazarathy2010fluid,masin2014highmultiplicity}. In a flexible job shop, the same
aggregate perspective can also capture machine flexibility. For each operation
position of a job type, its many copies can be distributed among eligible
machines; allowing these allocation quantities to be fractional yields a
compact way to balance machine workloads. Related fluid models have already
used such continuous allocation information to initialize, guide, or learn
scheduling decisions in multiplicity flexible job shops
\citep{ding2024hybridfluid,ding2024multiplicity,
ding2024multipolicy,ding2025deep,yang2025ddqn}. Our focus is different: rather
than using the fluid allocation only to guide subsequent scheduling decisions,
we ask whether an optimal fluid allocation can itself be realized
exactly by a feasible discrete execution. The relaxation studied here does not
model jobs as continuous flow through prescribed operation routes; instead, it
directly divides the required operations at each job-type position among their
eligible machines. After a finite scaling makes these machine allocations
integral, the remaining challenge is to assign the resulting operation
occurrences to individual jobs so that the operations of every job are
processed in the required order, without changing the machine loading
specified by the fluid allocation.

Fluid relaxation is one way to exploit the repetition inherent in
high-multiplicity scheduling by aggregating many job copies. A complementary
approach is to exploit the same repetition directly in the schedule itself:
when a small set of job types occurs many times, one may seek a short operating
pattern that can be repeated rather than construct the entire schedule copy by
copy. This idea underlies cyclic and recurrent scheduling, where repeated
execution is organized around a recurring operating pattern
\citep{roundy1992cyclic,hanen1994recurrent,levner2010cyclicsurvey}. Flexible
cyclic job-shop models further allow machine assignments to be chosen within
such repeated operation \citep{zhang2015cyclic,quinton2020flexible}. These
studies show how repeated schedules can be constructed and optimized once a
cyclic or recurrent regime is adopted, but they do not establish how the
performance of such schedules relates to the fluid workload. This
leaves a natural connection between the two perspectives above: can a feasible
recurrent schedule attain the fluid workload exactly? If so, what does such
recurrent attainment imply for finite high-multiplicity instances as the
number of job copies grows?

The proposed approach separates the machine-level schedule from the way
individual operations are grouped into jobs. Starting from an optimal fluid
allocation, the construction first scales the fractional machine allocations
until each machine receives an integer number of operations. These operations
are then scheduled on their assigned machines, and the resulting machine
schedule is repeated. At this point, machine capacity and the fluid workload
are already fixed, but the scheduled operations have not yet been grouped into
individual jobs. The construction is completed by grouping operations from
different repetitions so that every job follows its required operation order,
without changing any machine assignment or scheduled interval. This produces
a feasible repeating schedule with the same workload value as the optimal
fluid allocation.

This construction gives two consequences. First, the optimal fluid workload can be
attained exactly by a finite repeating discrete execution after a suitable
scaling. Second, the repeating part can be used as the main body of an ordinary
finite schedule, with only a bounded amount of work needed before and after it.
As a result, when the numbers of the different job types grow in fixed
proportions, the difference between the optimal finite makespan and the fluid
lower bound remains bounded by a constant that does not grow with the number
of job copies. The relative difference therefore converges to zero as the
instance grows.

The recurrent construction also reveals a useful scheduling structure for
finite instances. It shows that a high-multiplicity schedule need not be
designed independently for every job copy: the way job types share machine
capacity and the repeated order in which their operations are scheduled can be
represented by a much smaller job-type pattern, while job copies
are introduced when that pattern is realized. The exact pattern required by
the theorem, however, may involve more repetitions than are useful for a
particular finite instance, and exact return to the same system state is not
needed when the schedule has a fixed end.

This observation leads to type-based cyclic template replay (TCTR). TCTR keeps
the two parts of the recurrent construction that are useful for finite
scheduling---machine-load balancing by job type and repeated job-type
ordering---while allowing the pattern size and machine choices to adapt to the
actual finite instance. It searches for small integer patterns using the same
machine-load balancing structure as the fluid relaxation, tests how these patterns
perform when repeated over the complete set of jobs, and uses the selected
pattern to construct the final schedule. During this replay, concrete job
readiness is respected and machine assignments can be adjusted when this
improves the finite schedule. Thus the optimization remains at the job-type
level, with a model whose size does not increase with the number of job copies,
while the replay step produces the required operation-level schedule.

The computational study examines both parts of this development. A dense
fixed-composition study with instances of up to $10^5$ operations examines
whether finite schedules approach the fluid behavior predicted by the
recurrent theory. A broader study on 676 multiplicity-expanded public FJSP
instances evaluates TCTR as a finite-horizon scheduling method. TCTR produces
a feasible schedule for every instance and achieves a 1.54\% mean gap to the
fluid lower bound.

The main contributions of this paper are summarized as follows:
\begin{itemize}

\item
\textbf{Exact recurrent fluid attainment.}
The optimal fluid workload can be attained exactly by a feasible repeating
discrete schedule after a suitable finite scaling.

\item
\textbf{A finite-instance performance guarantee.}
Under fixed-composition growth, the optimal makespan remains within a constant
additive gap of the fluid lower bound, and the relative gap therefore vanishes
as the instance grows.

\item
\textbf{A theory-guided finite-horizon scheduling method.}
The recurrent structure is translated into type-based cyclic template replay
(TCTR), which searches compact job-type patterns rather than individual job
copies, with an optimization model whose size does not grow with the
multiplicities.

\item
\textbf{Computational validation at scale.}
The theoretical behavior is examined on instances with up to $10^5$
operations, while TCTR is evaluated on 676 multiplicity-expanded public FJSP
instances and achieves a 1.54\% mean gap to the fluid lower bound.

\end{itemize}

The remainder of the paper is organized as follows.
Section~\ref{sec:literature} reviews the related literature.
Section~\ref{sec:problem} formulates the high-multiplicity FJSP, its fluid
relaxation, and the exact event-driven representation used for recurrence.
Section~\ref{sec:mmc} establishes exact recurrent fluid attainment and the
fixed-composition additive bound. Sections~\ref{sec:tctr} and \ref{sec:experiments}
develop and evaluate TCTR, respectively.
Section~\ref{sec:conclusion} concludes.

\section{Literature Review}
\label{sec:literature}

The literature most closely related to this study develops along two
complementary ways of exploiting repetition in high-multiplicity scheduling.
One aggregates large numbers of repeated job copies through fluid descriptions
of workload; the other represents repetition explicitly through cyclic or
recurrent schedules. In flexible job shops, machine allocation becomes part of
the first view, while machine assignment can also be incorporated into the
second. The discussion
below follows this progression and focuses on the relation between fluid
workload and recurrent discrete execution. General FJSP routing and sequencing
are reviewed by \citet{dauzere2024} and \citet{jiang2023}.

\subsection{Fluid-to-finite scheduling under fixed machine routes}
\label{sec:lit_high_multiplicity}

High-multiplicity scheduling represents a fixed set of job types by their
multiplicities rather than by explicitly listing every job copy. Early work
exploited repeated jobs directly in job-shop scheduling
\citep{boudoukh2001identical}. In large-volume settings,
\citet{bertsimas1999asymptotic} developed asymptotically optimal scheduling
methods and established machine workload as a useful guide when many copies of
a fixed set of job types must be processed. Classical fluid relaxations
formalize this view by replacing discrete job copies with continuous amounts
of work.

An important question in this literature is how fluid information translates
back to finite schedules. For deterministic makespan minimization,
\citet{bertsimas2002fluid} developed a fluid synchronization algorithm and
proved a makespan bound equal to the fluid lower bound plus
$(I+2)P_{\max}J_{\max}$, where the additive term depends on the fixed job-type
structure rather than on the multiplicities. Related work studies fluid-based
large-volume scheduling under stochastic processing, broader job-type
structures, and prescribed initial and terminal inventories
\citep{dai2002fluid,nazarathy2010fluid,masin2014highmultiplicity}.
Together, these results establish that, under fixed machine routes, fluid
relaxations can support finite-schedule guarantees whose relative loss
decreases as the multiplicities grow.

In a flexible job shop, the fluid description must also account for machine
allocation. The machine used by an operation is no longer fixed, so repeated
operations at the same job-type position may be distributed among several
eligible machines. This changes the fluid problem itself: workload depends not
only on the multiplicities of the job types, but also on how their operations
are allocated across eligible machines. This leads to the multiplicity FJSP
literature discussed next.

\subsection{Fluid-based multiplicity FJSP with flexible machine assignment}
\label{sec:lit_mfjsp}

When many copies of the same job type are present, machine allocation can be
described by how the repeated operations at each operation position are
distributed among eligible machines. Allowing these machine-allocation
quantities to be continuous provides a natural fluid description of machine
flexibility and a way to balance machine workloads. Recent multiplicity FJSP
studies have used fluid information for initialization and online tracking,
adaptive search, reinforcement-learning state and action design, and
learning-assisted evolutionary optimization
\citep{ding2024hybridfluid,ding2024multiplicity,ding2024multipolicy,
ding2025deep,yang2025ddqn}.

These studies show that fluid information remains useful when machine
assignments are flexible. Their main concern is how a fluid allocation can guide
scheduling decisions for a finite instance. A different question concerns the
fluid allocation itself: whether the optimal fluid workload can also be attained
by a feasible discrete execution with the same long-run value. This question
becomes particularly natural when the repetition of job types is represented
directly in the schedule rather than only through the fluid relaxation.

\subsection{Recurrent and cyclic scheduling}
\label{sec:lit_cyclic}

Repeated job types can also be exploited by repeating the schedule itself.
Cyclic and recurrent scheduling develop this idea by seeking operating
patterns that can be repeated while maintaining feasibility across
repetitions. \citet{roundy1992cyclic} studied cyclic schedules for job shops
with identical jobs, while \citet{hanen1994recurrent} formulated the
recurrent job shop and analyzed its critical circuits. \citet{levner2010cyclicsurvey}
surveyed the complexity of cyclic scheduling problems, and
\citet{kimbrel2008highmultiplicity} studied high-multiplicity cyclic job shops
with identical jobs and fixed machine routes.

Machine flexibility has also been incorporated into cyclic scheduling.
\citet{zhang2015cyclic} considered a flexible job shop with time-window,
transport, and resource-capacity constraints, including products generated in
prescribed proportions. \citet{quinton2020flexible} jointly optimized machine
assignments and operation sequences in a flexible cyclic job shop. These
studies establish that repeated execution and flexible machine assignment can
be handled within the same scheduling framework.

The recurrent scheduling literature starts from a repeated discrete execution
and asks how that execution should be constructed or optimized.
Minimum-cycle-mean theory characterizes the value of cycles in finite directed
graphs \citep{karp1978cyclemean}, while max-plus and timed-event-graph theory
relate critical circuits to the long-run behavior of recurrent systems
\citep{baccelli1992synchronization}. These tools characterize performance once
a recurrent representation has been specified. Fluid scheduling starts from a
different object: a workload target obtained from a continuous relaxation.
The two perspectives therefore describe different ways of using the same
underlying repetition in job types.

\subsection{Connecting fluid workload and recurrent execution}
\label{sec:lit_synthesis}

Taken together, these literatures establish two complementary views of
high-multiplicity scheduling. Fluid methods characterize workload and, under
fixed machine routes, connect the fluid lower bound to finite-schedule
performance. Their extensions to multiplicity FJSP show how flexible machine
allocation can be represented in a fluid relaxation and used to guide
scheduling for finite instances. Recurrent scheduling, meanwhile, shows how a
repeated discrete schedule can be constructed and optimized, including
settings in which machine assignments remain flexible.

What remains unresolved is the relation between these two views in a
high-multiplicity flexible job shop: whether the optimal fluid workload can be
attained by a feasible recurrent discrete schedule. Establishing this relation
also raises a finite-instance question: whether recurrent attainment can yield
a performance guarantee for finite instances as their multiplicities grow.
The present study establishes this connection constructively and then uses the
repeated structure revealed by the recurrent result to organize
finite-horizon scheduling.

\FloatBarrier

\section{Problem Setting, Fluid Relaxation, and Event-Driven Representation}
\label{sec:problem}
\label{sec:model}

This section establishes the mathematical setting for the recurrent results
that follow. We first formulate the finite high-multiplicity FJSP and its
fixed-composition growth and fluid relaxation, and then introduce an exact
event-driven representation of finite schedules. The fluid relaxation supplies
the fluid workload, while the event-driven representation supplies the discrete state
needed to define recurrence.

\subsection{Multiplicity FJSP}

Let $\mathcal M=\{M_1,\ldots,M_m\}$ be the set of physical machines and
let $i=1,\ldots,R$ index the job types. Job type $i$ consists of an ordered
sequence of $H_i\geq1$ operation positions. At position $h$, the operation may be
processed on any machine in the nonempty eligible set
$\mathcal E_{i,h}\subseteq\mathcal M$ and requires deterministic processing
time $p_{i,h}(M)\in\mathbb N_+$ on $M\in\mathcal E_{i,h}$. Define the set of
type-position pairs
\begin{equation}
    \mathcal P:=\{(i,h):1\leq i\leq R,\ 1\leq h\leq H_i\}.
    \label{eq:type_position_set_ejor}
\end{equation}
The operation sequences, eligible sets, processing times, and physical
machines comprise the fixed shop structure and remain unchanged as job
multiplicities vary.

A multiplicity vector
$\boldsymbol\nu=(\nu_1,\ldots,\nu_R)^\top\in\mathbb N_0^R$ specifies
$\nu_i$ interchangeable copies of job type $i$. Copy $k$ is denoted by
$J_{i,k}$, $k=1,\ldots,\nu_i$, and its operation at position $h$ by
$O_{i,k,h}$. The instance is therefore described compactly by the fixed shop
structure and $\boldsymbol\nu$, whereas any realized schedule must assign and
sequence every operation occurrence.

All job copies are available at time zero, processing is nonpreemptive, and
there are no setup times, transfer delays, or blocking constraints.
A schedule $(u,\mathbf s)$ specifies a machine
$u_{i,k,h}\in\mathcal E_{i,h}$ and a start time
$s_{i,k,h}\in\mathbb R_{\geq0}$ for every $O_{i,k,h}$. It is feasible if
\begin{align}
    s_{i,k,h+1}
    &\geq s_{i,k,h}+p_{i,h}(u_{i,k,h}),
    &&h=1,\ldots,H_i-1,
    \label{eq:multiplicity_precedence_ejor}
\end{align}
for every job copy, and if the half-open processing intervals of any two
operations assigned to the same physical machine are disjoint. Its makespan is
\begin{equation}
    C_{\max}(u,\mathbf s)
    :=\max_{\substack{1\leq i\leq R,\ 1\leq k\leq\nu_i\\
                      1\leq h\leq H_i}}
      \{s_{i,k,h}+p_{i,h}(u_{i,k,h})\}.
    \label{eq:makespan_multiplicity_ejor}
\end{equation}
The optimal makespan for multiplicity vector $\boldsymbol\nu$ is
\begin{equation}
    C_{\max}^*(\boldsymbol\nu)
    :=\min\{C_{\max}(u,\mathbf s):(u,\mathbf s)
    \text{ is feasible for }\boldsymbol\nu\}.
    \label{eq:general_multiplicity_objective_ejor}
\end{equation}
For the empty instance, set $C_{\max}^*(\mathbf0):=0$.

\subsection{Fixed-composition growth and fluid relaxation}
\label{subsec:fixed_mix_instances_ejor}

Each $\boldsymbol\nu$ defines one finite multiplicity FJSP instance. To study
how such instances scale while the shop structure remains fixed, we
consider families generated by a nonzero growth vector
$\mathbf b\in\mathbb N_0^R\setminus\{\mathbf0\}$ and a fixed offset
$\mathbf r\in\mathbb N_0^R$:
\begin{equation}
    \boldsymbol\nu(q):=q\mathbf b+\mathbf r,
    \qquad q\in\mathbb N_0.
    \label{eq:fixed_mix_family_definition_ejor}
\end{equation}
One unit increase in $q$ adds $b_i$ copies of type $i$. Zero components of
$\mathbf b$ allow only a subset of job types to grow, while the offset
$\mathbf r$ records demand that remains fixed as $q$ increases. The
theoretical guarantees concern these fixed-composition families, but every
$\boldsymbol\nu(q)$ remains an ordinary finite FJSP instance.

\label{subsec:fluid_relaxation_ejor}

For the fixed shop structure and any demand vector
$\mathbf v\in\mathbb R_{\geq0}^R$, let $x_{i,h,M}$ denote the amount of
type-$i$ demand at position $h$ assigned fractionally to machine $M$. The
fluid relaxation minimizes the largest resulting machine workload:
\begin{equation}
\begin{aligned}
    \rho_F(\mathbf v):=\min_{\rho,x}\quad &\rho\\
    \text{subject to}\quad
    &\sum_{M\in\mathcal E_{i,h}}x_{i,h,M}=v_i,
    &&i=1,\ldots,R,\ h=1,\ldots,H_i,\\
    &\sum_{i=1}^R
      \sum_{\substack{1\leq h\leq H_i\\M\in\mathcal E_{i,h}}}
      p_{i,h}(M)x_{i,h,M}\leq\rho,
    &&M\in\mathcal M,\\
    &x_{i,h,M}\geq0.
\end{aligned}
\label{eq:multitype_fluid_lp_ejor}
\end{equation}
An optimal $x$ is an \emph{optimal fluid allocation}. The relaxation retains
the demand balance and eligible-machine restrictions at every type-position
pair, but it does not identify individual job copies or impose the temporal
linking of their successive operations.

The value $\rho_F$ is componentwise nondecreasing and positively homogeneous:
$\rho_F(\alpha\mathbf v)=\alpha\rho_F(\mathbf v)$ for every $\alpha\geq0$.
For later reporting, we use the equivalent notation
\begin{equation}
    \mathrm{LB}_F(\boldsymbol\nu):=\rho_F(\boldsymbol\nu)
    \label{eq:fluid_lower_bound_definition_ejor}
\end{equation}
for the fluid lower bound of a complete finite demand vector.
Indeed, the machine-assignment counts of any feasible schedule satisfy the
demand balances in \eqref{eq:multitype_fluid_lp_ejor}, and machine nonoverlap
implies that every resulting machine load is at most its makespan. Hence
\[
    \rho_F(\boldsymbol\nu)
    =\mathrm{LB}_F(\boldsymbol\nu)
    \leq C_{\max}^*(\boldsymbol\nu).
\]

The fluid relaxation therefore supplies both the fluid workload and
a lower bound for every complete finite demand vector. What it does not supply
is the discrete system state needed to define recurrence. We therefore turn to
an event-driven representation that removes interchangeable labels while
preserving the finite scheduling problem exactly.

\subsection{Event-driven state}

Finite multiplicities create interchangeable copies of each job type, and the
fixed shop structure may also contain interchangeable physical machines.
Two physical machines belong to the same \emph{machine type} if they have the same
eligibility status and, whenever eligible, the same processing time at every
$(i,h)\in\mathcal P$. Let $\mathcal M_1,\ldots,\mathcal M_L$ be the resulting
partition of $\mathcal M$, let $\widehat M_l$ be a fixed representative of
$\mathcal M_l$, and let
$\bar m_l:=|\mathcal M_l|$. Eligibility of $\widehat M_l$ and the value
$p_{i,h}(\widehat M_l)$ therefore depend only on machine type $l$. Write
\[
    \bar{\mathbf m}:=(\bar m_1,\ldots,\bar m_L)^\top .
\]

At decision stage $t\in\mathbb N_0$, let
$T_t\in\mathbb R_{\geq0}$ denote elapsed schedule time. Absolute time does not
affect admissibility and is therefore tracked separately from the state
\begin{equation}
    X_t=(\mathbf n_t,\mathbf m_t,\mathcal W_t).
    \label{eq:model_state}
\end{equation}
The stage index $t$ records the action sequence. Assignment actions leave
$T_t$ unchanged, whereas temporal-advancement actions increase it.

The waiting-job vector is
\begin{equation}
    \mathbf n_t=(n_{i,h,t})_{(i,h)\in\mathcal P}
    \in\mathbb N_0^{|\mathcal P|},
\end{equation}
where $n_{i,h,t}$ counts idle copies of job type $i$ whose next operation is
at position $h$. Thus $n_{i,1,t}$ counts unstarted jobs of type $i$, while
coordinates with $h\geq2$ count admitted jobs waiting for their next
operation. The idle-machine vector is
\begin{equation}
    \mathbf m_t=(m_{1,t},\ldots,m_{L,t})^\top,
    \qquad 0\leq m_{l,t}\leq\bar m_l,
\end{equation}
where $m_{l,t}$ counts idle machines in $\mathcal M_l$.

Active operations are represented by a finite multiset $\mathcal W_t$. A
descriptor $(l,i,h,\delta)$ records one machine in $\mathcal M_l$ processing
position $h$ of a type-$i$ job, with remaining processing time $\delta>0$.
Identical descriptors may occur, so braces and the operations $\uplus$ and
$\setminus$ involving $\mathcal W_t$ are interpreted in the multiset sense.
State consistency requires
\begin{equation}
    \sum_{(l',i,h,\delta)\in\mathcal W_t}\mathbb I(l'=l)
    =\bar m_l-m_{l,t},
    \qquad l=1,\ldots,L.
    \label{eq:machine_state_consistency}
\end{equation}

For multiplicity vector $\boldsymbol\nu$, the initial state is
\begin{equation}
    X_0(\boldsymbol\nu)
    =(\mathbf n_0,\bar{\mathbf m},\emptyset),
    \qquad
    n_{i,1,0}=\nu_i,
    \qquad
    n_{i,h,0}=0\quad(h\geq2),
    \qquad T_0=0,
    \label{eq:model_initial_state}
\end{equation}
and the terminal state is
$X_{\mathrm{term}}=(\mathbf0,\bar{\mathbf m},\emptyset)$.

\subsection{Event-driven actions and dynamics}

An assignment action $a_t=(l,i,h)$ starts position $h$ of one waiting type-$i$
job on one idle machine in $\mathcal M_l$. The admissible assignments at
$X_t$ are
\begin{equation}
    \mathcal A_{\mathrm{assgn}}(X_t)
    :=
    \left\{
    (l,i,h):1\leq l\leq L,\ (i,h)\in\mathcal P
    \;\middle|\;
    n_{i,h,t}>0,\ m_{l,t}>0,\
    \widehat M_l\in\mathcal E_{i,h}
    \right\}.
    \label{eq:admissible_assignment_ejor}
\end{equation}
If $a_t=(l,i,h)\in\mathcal A_{\mathrm{assgn}}(X_t)$, then
\begin{align}
    \mathbf n_{t+1}&=\mathbf n_t-\mathbf e_{i,h},\\
    \mathbf m_{t+1}&=\mathbf m_t-\mathbf e_l,\\
    \mathcal W_{t+1}
    &=\mathcal W_t\uplus
    \{(l,i,h,p_{i,h}(\widehat M_l))\},\\
    T_{t+1}&=T_t,
    \label{eq:assignment_transition_ejor}
\end{align}
where $\mathbf e_{i,h}$ and $\mathbf e_l$ are standard basis vectors of the
appropriate dimensions.

Because copies of the same job type and physical machines in the same machine
type are interchangeable, every assignment in this representation can be
lifted to a discrete execution. Select one matching waiting copy $J_{i,k}$ and one idle
physical machine $M\in\mathcal M_l$, and set
$u_{i,k,h}=M$ and $s_{i,k,h}=T_t$. The concrete execution retains this
job-machine association until completion, but the state representation does not need
to store either label.

The temporal-advancement operator $\mathcal F$ moves the clock to the next
completion event and is admissible if and only if
$\mathcal W_t\neq\emptyset$.
Define the time increment and completing submultiset by
\begin{equation}
    \Delta_t:=\min\{\delta\mid(l,i,h,\delta)\in\mathcal W_t\},
    \qquad
    \mathcal W_t^*
    :=\{(l,i,h,\delta)\in\mathcal W_t\mid\delta=\Delta_t\}.
\end{equation}
If $a_t=\mathcal F$, then
\begin{align}
    \mathcal W_{t+1}
    &=\{(l,i,h,\delta-\Delta_t)
      \mid(l,i,h,\delta)\in\mathcal W_t\setminus\mathcal W_t^*\},\\
    \mathbf m_{t+1}
    &=\mathbf m_t+
      \sum_{(l,i,h,\delta)\in\mathcal W_t^*}\mathbf e_l,\\
    \mathbf n_{t+1}
    &=\mathbf n_t+
      \sum_{\substack{(l,i,h,\delta)\in\mathcal W_t^*\\h<H_i}}
      \mathbf e_{i,h+1},\\
    T_{t+1}&=T_t+\Delta_t,
    \label{eq:advancement_transition_ejor}
\end{align}
where a descriptor with $h=H_i$ completes its job and contributes no new
waiting-job component. Simultaneous completions are represented with their
multiset multiplicities.

\subsection{Exactness for makespan}

We now show that restricting starts to the event epochs represented above is
without loss for makespan minimization. Only temporal advancement incurs a
positive stage cost. Define
\begin{equation}
    c(X_t,a_t)
    :=
    \begin{cases}
        0,&a_t\in\mathcal A_{\mathrm{assgn}}(X_t),\\
        \Delta_t,&a_t=\mathcal F.
    \end{cases}
    \label{eq:stage_cost_ejor}
\end{equation}
Thus $c(X_t,a_t)=T_{t+1}-T_t$. If a sequence of $K$ admissible actions
reaches $X_{\mathrm{term}}$, then
\begin{equation}
    \sum_{t=0}^{K-1}c(X_t,a_t)
    =T_K-T_0=C_{\max}.
    \label{eq:model_makespan_identity_ejor}
\end{equation}
It remains to verify that an optimal finite schedule can always be represented
by such an event-driven action sequence.

\begin{definition}[Completion-event schedule]
\label{def:completion_event_ejor}
A feasible schedule $(u,\mathbf s)$ is a \emph{completion-event schedule} if
every operation starts either at time zero or at the completion time of another
operation. Thus, for each $O_{i,k,h}$, either $s_{i,k,h}=0$ or
\[
    s_{i,k,h}
    =s_{i',k',h'}+p_{i',h'}(u_{i',k',h'})
\]
for some $i',k',h'$.
\end{definition}

\begin{proposition}[Model-generated schedules]
\label{prop:model_generated_ejor}
A feasible schedule for a multiplicity FJSP instance can be produced by the
event-driven representation if and only if it is a completion-event schedule.
\end{proposition}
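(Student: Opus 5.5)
The two directions will be handled separately. The first step is to fix what "produced by the event-driven representation" means: a finite admissible action sequence from $X_0(\boldsymbol\nu)$ to $X_{\mathrm{term}}$, together with the lifting rule that realizes each assignment $(l,i,h)$ at stage $t$ by a concrete copy $J_{i,k}$ and machine $M\in\mathcal M_l$ with $s_{i,k,h}=T_t$.

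\emph{Only if.} Take such an action sequence and its lifted schedule. I will show by induction on $t$ that the clock values are always event times. Specifically, $T_0=0$, and every $T_{t+1}=T_t+\Delta_t$ with $a_t=\mathcal F$ equals the completion time of every operation in $\mathcal W_t^*$. This uses the invariant that a descriptor with remaining time $\delta$ in $\mathcal W_t$ corresponds to a lifted operation completing at $T_t+\delta$, and this invariant is preserved by both transitions. Hence every start time $T_t$ is either $0$ or a completion time, which is the definition of a completion-event schedule.

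Feasibility of the lifted schedule must also be checked. Precedence holds because position $h+1$ of a copy enters $\mathbf n$ only at the advancement step where position $h$ completes, and it can be assigned only afterwards. Machine nonoverlap holds because a physical machine is taken from the idle pool and returned only at its operation's completion, as guaranteed by the consistency condition \eqref{eq:machine_state_consistency}. Reaching $X_{\mathrm{term}}$ ensures that every operation was scheduled.

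\emph{If.} Given a feasible completion-event schedule, list its distinct start/completion epochs $0=\tau_0<\tau_1<\dots<\tau_K$. I will build the action sequence epoch by epoch. At epoch $\tau_j$, first apply $\mathcal F$, then issue one assignment $(l,i,h)$ for each operation starting at $\tau_j$, with $l$ the type of its machine. Between consecutive starting epochs, extra $\mathcal F$ steps are inserted so that the clock passes through all intermediate completion epochs; these steps are needed because $\mathcal F$ always jumps to the next completion. Each operation starts at $0$ or at some completion time, so the start epochs form a subset of the epochs the clock visits.

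The key step, and the main obstacle, is admissibility. Two facts must be established.
\begin{enumerate}
\item When $\mathcal F$ is applied, $\mathcal W_t\neq\emptyset$ and $T_t+\Delta_t$ equals the next schedule epoch.
\item Each assignment at $\tau_j$ finds $n_{i,h,t}>0$ and $m_{l,t}>0$.
\end{enumerate}
For the second fact, I will argue by counting, maintaining the invariant that at each clock value the state aggregates the schedule exactly. That is, $n_{i,h,t}$ equals the number of copies whose operations before $h$ have completed by $T_t$ and whose operation $h$ has not yet started (starts at $T_t$ are counted as not yet issued). Likewise, $m_{l,t}$ equals the number of machines in $\mathcal M_l$ that are idle at $T_t$.

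Feasibility of the schedule then gives positivity of these counts. Precedence \eqref{eq:multiplicity_precedence_ejor} ensures each copy starting $h$ at $\tau_j$ is counted in $n_{i,h,t}$. Nonoverlap of half-open intervals ensures each machine starting an operation at $\tau_j$ is idle at that moment. Because jobs within a type and machines within a machine type are interchangeable, the lifting may choose exactly the schedule's copy and machine, so the lifted schedule reproduces $(u,\mathbf s)$ up to relabeling.

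The subtle points to handle carefully are the following. If no operation is active at some epoch, the schedule would have idle gaps with no active work; a completion-event schedule rules this out, since every start after $0$ is a completion time. Simultaneous completions are handled through multiset multiplicities. Finally, at the last epoch all operations have completed, so the final state is $X_{\mathrm{term}}$.
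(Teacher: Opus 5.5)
Your proposal is correct and follows essentially the same route as the paper. The ``only if'' direction rests on the clock only ever advancing to operation-completion epochs. The ``if'' direction serializes the prescribed starts at each epoch (time $0$ and the completion times), uses schedule feasibility to certify that every assignment is admissible, and lets $\mathcal F$ land exactly on the next epoch; your counting and remaining-time invariants just make explicit what the paper's induction asserts. (At $\tau_0=0$ you must of course omit the initial $\mathcal F$ step, since $\mathcal W_0=\emptyset$.)
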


\begin{proof}
Let $(u,\bar{\mathbf s})$ be a completion-event schedule and list its distinct
positive completion epochs as
$0<\theta_1<\cdots<\theta_E$, with $\theta_0:=0$. At each epoch $\theta_e$,
serialize the operations prescribed to start at that time. Every such
assignment is admissible because schedule feasibility makes its job ready and
its assigned machine eligible and idle; operations starting simultaneously use
distinct physical machines. After these assignments, the event-driven state
and $(u,\bar{\mathbf s})$ contain the same active operations with the same
remaining processing times. For $e<E$, the operator $\mathcal F$ therefore
advances exactly to $\theta_{e+1}$ and realizes the same simultaneous
completions. Induction over the completion epochs reproduces the schedule.

Conversely, every assignment generated by the representation starts at the
current clock value. Assignment actions leave the clock unchanged, whereas
every positive temporal advance terminates at an operation-completion epoch.
Since $T_0=0$, every generated operation start is either zero or a completion
time.
\end{proof}

\begin{lemma}[Dominance of completion-event schedules]
\label{lem:completion_event_dominance_ejor}
For every feasible schedule $(u,\mathbf s)$, there is a completion-event
schedule $(u,\bar{\mathbf s})$ that preserves the operation order on every
machine and satisfies
\begin{equation}
    C_{\max}(u,\bar{\mathbf s})\leq C_{\max}(u,\mathbf s).
    \label{eq:completion_dominance_ejor}
\end{equation}
\end{lemma}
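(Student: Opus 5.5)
We keep the machine assignment $u$ fixed and take the earliest start schedule that respects two sets of orderings: each job's precedence chain, and the order of operations on each machine as given by $\mathbf s$. For each machine $M$, list the operations with $u_{i,k,h}=M$ in nondecreasing order of $s_{i,k,h}$. Because intervals on a machine are disjoint and processing times are positive, this order is strict and well defined. Form a directed graph $G$ whose nodes are the operations $O_{i,k,h}$. Add an arc $O_{i,k,h}\to O_{i,k,h+1}$ for each job, and an arc from each operation to its immediate successor on its machine. Weight each arc by the processing time of its tail, $p_{i,h}(u_{i,k,h})$. Every arc $a\to b$ satisfies $s_b\geq s_a+p_a$: for job arcs this is \eqref{eq:multiplicity_precedence_ejor}, and for machine arcs it follows from nonoverlap together with the order. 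Summing these inequalities around any cycle would give $0\geq$ a positive quantity, so $G$ is acyclic.

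Define $\bar s$ by the longest-path recursion in topological order: $\bar s_b:=\max\{0,\max_{a\to b}(\bar s_a+p_a)\}$, with $\bar s_b=0$ when $b$ has no predecessor. Feasibility of $(u,\bar{\mathbf s})$ is immediate from this definition. Job arcs give \eqref{eq:multiplicity_precedence_ejor}. Consecutive machine arcs give $\bar s_b\geq\bar s_a+p_a$, so the intervals on each machine are disjoint and appear in the same order as under $\mathbf s$, which preserves the machine orders.

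We now check the two claimed properties. \emph{Completion-event property:} every $\bar s_b$ is either $0$ or equal to $\bar s_a+p_a$ for some predecessor $a$, and $\bar s_a+p_a$ is the completion time of $a$ under $(u,\bar{\mathbf s})$. This is exactly Definition~\ref{def:completion_event_ejor}. \emph{Makespan:} we show $\bar s_b\leq s_b$ by induction along the topological order. In the recursion, $0\leq s_b$, and for each predecessor $\bar s_a+p_a\leq s_a+p_a\leq s_b$, using the induction hypothesis and then the arc inequality for $\mathbf s$. Each completion time $\bar s_b+p_b$ is therefore at most $s_b+p_b$, and taking maxima gives \eqref{eq:completion_dominance_ejor}.

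The main point needing care is acyclicity and well-definedness, since this is what makes the recursion valid. It relies on the strict positivity of processing times, $p_{i,h}(M)\in\mathbb N_+$, and on the fact that disjoint half-open intervals on a machine induce a strict total order. The rest of the argument is a routine comparison of two solutions of the same system of difference constraints.
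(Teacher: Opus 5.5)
Your proposal is correct and follows essentially the same route as the paper: fix $u$ and the machine orders, start each operation as early as its immediate job and machine predecessors allow, and argue by induction that $\bar{\mathbf s}\leq\mathbf s$, which also yields the completion-event property. The only difference is cosmetic: the paper processes operations in nondecreasing order of their original start times, and positive processing times guarantee that both immediate predecessors start strictly earlier, so this order already serves as the topological order; your separate cycle-sum acyclicity argument is therefore not needed, though it is equally valid.
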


\begin{proof}
Fix the machine assignments $u$ and the machine orders induced by
$\mathbf s$. Scan the operations by nondecreasing original start time and
start each operation as early as permitted by time zero and the shifted
completion times of its immediate job and machine predecessors, when present.
Because processing times are positive, both predecessors appear earlier in the
scan. Induction therefore gives
$\bar s_{i,k,h}\leq s_{i,k,h}$ for every operation. Feasibility and all machine
orders are preserved, and the makespan cannot increase. Every positive shifted
start is equal to the completion time of one of its predecessors, so
$(u,\bar{\mathbf s})$ is a completion-event schedule.
\end{proof}

Proposition~\ref{prop:model_generated_ejor} and
Lemma~\ref{lem:completion_event_dominance_ejor} show that some optimal schedule
is representable by the event-driven representation. Together with
\eqref{eq:model_makespan_identity_ejor}, this proves that minimizing path cost
from $X_0(\boldsymbol\nu)$ to $X_{\mathrm{term}}$ yields
$C_{\max}^*(\boldsymbol\nu)$. The representation is therefore not a relaxation: it
removes interchangeable labels and unnecessary continuous-time start epochs
without changing the finite-instance optimum. Its dominant completion-event
subclass is consistent with classical schedule-generation results
\citep{artigues2005schedule,giffler1960algorithms,sprecher1995semiactive}.

The fluid relaxation and this exact state representation provide the two
ingredients needed for the recurrent construction. Section~\ref{sec:mmc}
externalizes the unstarted supply coordinates while retaining all admitted
unfinished jobs and machine states, and asks whether the fluid workload
$\rho_F(\mathbf b)$ can be attained by a finite state-returning execution.

\section{Exact Recurrent Fluid Attainment and Finite-Instance Guarantees}
\label{sec:mmc}

This section establishes exact recurrent attainment of the fluid workload and
its finite-instance consequence. We first define recurrent state return and
finite boundary connectors, then use period-shifted slot linking to convert a
rational fluid allocation into a recurrent discrete execution. Applying the
construction to an optimal fluid allocation gives exact attainment, and
repeating the attaining period between bounded boundary pieces yields the
fixed-composition additive guarantee. Example~S.1 in the supplement illustrates the
single-job-type construction.

\subsection{Recurrent state and finite boundary connectors}
\label{subsec:fixed_mix_growth}

Fix a nonzero growth vector
$\mathbf b\in\mathbb N_0^R\setminus\{\mathbf0\}$. The coordinates $n_{i,1}$
record only unstarted supply, not work already admitted to the system. For
recurrent production, replace these coordinates by unlimited external supplies
and retain every coordinate for positions $h\geq2$:
\begin{equation}
    \operatorname{proj}(X)
    :=\bigl((n_{i,h})_{(i,h)\in\mathcal P,\ h\geq2},
            \mathbf m,\mathcal W\bigr).
    \label{eq:type_position_projection_ejor}
\end{equation}
The projected state therefore retains every admitted unfinished job and the
complete machine state.

A \emph{closed walk} is a finite walk returning to its initial projected state
and may repeat vertices; \emph{cycle} is used in this sense. A periodic
realization additionally repeats every operation interval after a fixed time
shift.

\begin{definition}[Type-position projected graph and balanced closed walks]
\label{def:fixed_mix_joint_cycles_ejor}
The weighted graph $\mathcal G_\infty^{(R)}$ has as ordinary vertices the
projected states reachable from
$v_0=(\mathbf0,\bar{\mathbf m},\emptyset)$ under unlimited external
position-$(i,1)$ supply. An admissible assignment gives a zero-weight edge,
and an assignment at position $(i,1)$ admits one new job of type $i$. If an
$\mathcal F$ action of duration $\Delta$ completes several descriptors, forced
auxiliary edges
serialize those completions in a fixed order with total weight $\Delta$.
Contracting each forced chain recovers the corresponding transition of the
event-driven representation.

For a closed walk $C$, let $w(C)$ be its total weight, $|C|$ its edge count,
and
$\mathbf A(C)=(A_1(C),\ldots,A_R(C))^\top$ its job-type admission vector.
The walk is $\mathbf b$-\emph{balanced} if
$\mathbf A(C)=y_C\mathbf b$ for some $y_C\in\mathbb N_+$.
Its admission-normalized value is $w(C)/y_C$.
\end{definition}

Unfinished-job inventories can make the projected graph infinite, so the
analysis works directly with finite walks. Every finite walk lifts to a
feasible event-driven execution by supplying a distinct job copy at each
admission edge. Conversely, externalizing the unstarted coordinates of any
finite execution gives a walk in the projected graph. Both operations preserve
actions, elapsed time, and machine feasibility.

Flow conservation applies separately to every operation position. A
$\mathbf b$-balanced closed walk therefore contains $A_i(C)$ assignments and
completions at every position of job type $i$. Hence
\begin{equation}
    |C|
    =2\sum_{i=1}^R H_iA_i(C)
    =2y_C\sum_{i=1}^R b_iH_i.
    \label{eq:multitype_cycle_accounting_ejor}
\end{equation}
Thus the natural normalization of a recurrent walk is by the multiplier
$y_C$ in its admission vector, rather than by its edge count.

The recurrent boundary need not be empty, so finite filling and draining paths
are needed to embed a recurrent walk into an ordinary finite schedule. For
each job type and demand vector
$\mathbf v\in\mathbb R_{\geq0}^R$, define the serial processing bound
\begin{equation}
    P_i
    :=\sum_{h=1}^{H_i}
       \min_{M\in\mathcal E_{i,h}}p_{i,h}(M),
    \qquad
    P(\mathbf v):=\sum_{i=1}^R v_iP_i.
    \label{eq:multitype_serial_time_ejor}
\end{equation}

\begin{lemma}[Finite type-position connectors]
\label{lem:type_position_connectors_ejor}
Every ordinary vertex $v$ has a finite filling path from $v_0$ to $v$ and a
finite no-admission draining path from $v$ to $v_0$. Their weights depend on
$v$ and the fixed shop structure, not on any later number of cycle
repetitions. Moreover, every finite all-idle projected state is an ordinary
vertex. If it contains $\beta_i$ unfinished jobs of type $i$, its connectors
can be chosen serially so that
\begin{equation}
    T_v^{\mathrm{in}}+T_v^{\mathrm{out}}
    =P(\boldsymbol\beta),
    \qquad
    \boldsymbol\beta=(\beta_1,\ldots,\beta_R)^\top.
    \label{eq:multitype_connector_identity_ejor}
\end{equation}
\end{lemma}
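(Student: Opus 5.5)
Filling paths come from the definition of the ordinary vertices, and draining paths come from simply finishing every active and waiting job without new admissions. For the all-idle claim, the serial connectors are constructed explicitly and their weights are added up.

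\emph{Filling.} By Definition~\ref{def:fixed_mix_joint_cycles_ejor}, every ordinary vertex $v$ is reachable from $v_0$ under unlimited external supply. Reachability means some finite walk exists, so I fix one such walk as the filling path; its weight $T_v^{\mathrm{in}}$ depends only on $v$ and the shop. For an all-idle projected state with waiting counts $n_{i,h}$, $h\geq2$, I would exhibit a walk directly. Process the unfinished jobs one at a time. For a type-$i$ job whose target next position is $h$, admit it at position $1$ on a minimum-time eligible machine type, advance with $\mathcal F$ until that completion, and repeat for positions $2,\dots,h-1$. The job then sits idle waiting at position $h$, and all machines are idle again. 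Since only one operation is ever active, every assignment is admissible: the machine is idle, it is eligible, and the job is waiting. Each $\mathcal F$ completes exactly one descriptor. The job contributes weight $\sum_{h'<h}\min_M p_{i,h'}(M)$. Concatenating over all $\beta_i=\sum_{h\geq2}n_{i,h}$ jobs reaches the state, which proves it is an ordinary vertex. One caveat I would state explicitly is that this presumes the waiting counts only at $h\geq 2$, as in \eqref{eq:type_position_projection_ejor}.

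\emph{Draining.} From an arbitrary ordinary vertex $v$, no admissions are used. Repeatedly apply $\mathcal F$, including its forced auxiliary chain, until $\mathcal W=\emptyset$; this takes finitely many steps since $\mathcal W$ is finite. Then take each waiting job in turn and run its remaining positions serially on minimum-time eligible machine types. Completions at $h<H_i$ create new waiting entries at $h+1$, and these are handled the same way. Termination follows because the total count of remaining operations, $\sum_{(i,h)} n_{i,h}(H_i-h+1)$ plus active descriptors, strictly decreases at each completion. The final state has $\mathbf n=\mathbf0$ (position-$1$ coordinates are external), $\mathbf m=\bar{\mathbf m}$ and $\mathcal W=\emptyset$, which is $v_0$. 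The total weight is bounded by the remaining active times plus the serial times, and this depends only on $v$. The bound does not involve any cycle repetitions, because the connectors are chosen once per vertex and reused.

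\emph{Identity \eqref{eq:multitype_connector_identity_ejor}.} For an all-idle state, the first draining phase is empty. The serial draining of a job waiting at position $h$ costs $\sum_{h'\geq h}\min_M p_{i,h'}(M)$. Adding this to its filling cost gives $P_i$ per job, and summing over jobs gives $\sum_i\beta_iP_i=P(\boldsymbol\beta)$.

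The main obstacle is bookkeeping in the serialized walk. I need to check that the forced auxiliary edges of $\mathcal G_\infty^{(R)}$ carry the right weight when only a single descriptor completes, and that minimum-time machines are always available. In the serial regime this is immediate: all machines are idle at each step, and the chain has one edge of weight $\Delta$.
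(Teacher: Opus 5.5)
Your proposal is correct and follows the paper's proof step for step. Filling paths come from reachability, and draining first completes the active descriptors and then serially finishes the waiting jobs without admissions. Any all-idle state is reached by serially processing job prefixes on fastest eligible machines, and summing prefix plus suffix gives $P_i$ per unfinished job; your extra bookkeeping (the termination count and the single-edge forced chains in the serial regime) only makes explicit what the paper leaves implicit.
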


\begin{proof}
The filling path to an ordinary vertex exists by reachability. To drain, first
complete every active descriptor and then process the finitely many waiting
jobs through their remaining positions without further admissions. Any finite
all-idle projected state can be reached by admitting its unfinished jobs and
processing their required operation-sequence prefixes serially. If each
operation uses a fastest eligible machine, the filling prefix and draining
suffix together process every operation of each unfinished job exactly once.
Summing by job type gives \eqref{eq:multitype_connector_identity_ejor}.
\end{proof}

\subsection{Period-shifted slot linking: from fluid allocation to recurrent execution}
\label{subsec:slot_linking_mechanism}

A fluid allocation fixes machine workloads but not the ownership of individual
operation occurrences. Period-shifted slot linking therefore freezes the
machine intervals first and restores job precedence only by relabelling slot
ownership across period copies; no machine interval is moved.

For a rational allocation $x$ satisfying the type-position balances for
$\mathbf b$, let its maximum machine load be
\begin{equation}
    \rho(x):=\max_{M\in\mathcal M}
    \sum_{i=1}^R
    \sum_{\substack{1\leq h\leq H_i\\M\in\mathcal E_{i,h}}}
       p_{i,h}(M)x_{i,h,M}
    \label{eq:multiroute_allocation_load_ejor}
\end{equation}

The construction has three steps. First, scale $x$ by a common denominator
$y$, pack the resulting integer occurrences into $[0,T)$ on each machine, and
repeat these type-position-labelled slot patterns every
$T=y\rho(x)$ time units. Second, for each job type $i$, label the $yb_i$ slots
at every position by lanes $k=1,\ldots,yb_i$. If $M_{i,h,k}$ and
$\varphi_{i,h,k}\in[0,T)$ are the machine and phase of one labelled slot, set
$z_{i,1,k}=0$ and recursively choose
\begin{equation}
z_{i,h+1,k}:=\min\left\{
d\in\mathbb Z_{\geq z_{i,h,k}}:
\varphi_{i,h+1,k}+dT
\geq
\varphi_{i,h,k}+z_{i,h,k}T+p_{i,h}(M_{i,h,k})
\right\}.
\label{eq:period_offset_recursion_ejor}
\end{equation}
Generation $g$ in lane $(i,k)$ uses period copy $g+z_{i,h,k}$ of its
position-$h$ slot. The map $g\mapsto g+z_{i,h,k}$ is a bijection on
$\mathbb Z$, so linking changes job ownership across period copies but never
moves a machine interval. Third, translating $g$ by one between consecutive
boundaries establishes state return.

Figure~\ref{fig:periodic_mmc_construction_ejor} illustrates the three steps:
fluid scaling fixes the periodic machine slots, period shifts link these slots
into precedence-feasible jobs without moving them, and consecutive period
boundaries recover the same projected state.
\begin{figure}[H]
    \centering
    \includegraphics[width=\textwidth,trim=4bp 3bp 2bp 4bp,clip]
    {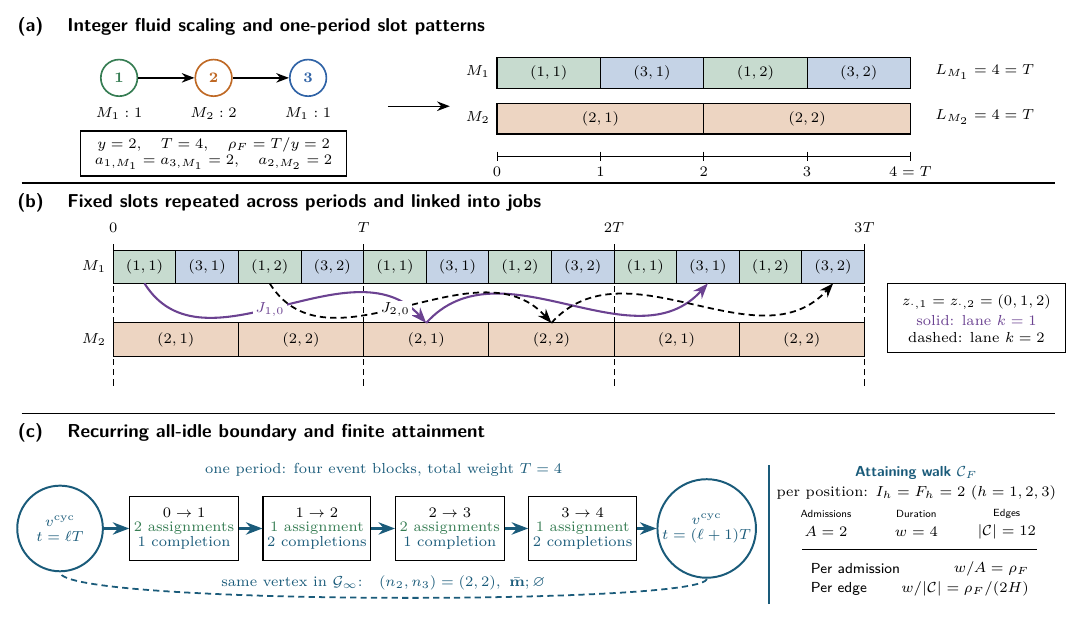}
    \setlength{\abovecaptionskip}{4pt}
    \caption{The three steps of period-shifted slot linking for one job type
    with three operation positions, $y=2$, and $T=4$: integer fluid scaling,
    cross-period job linking with fixed machine slots, and return to the same
    all-idle projected state.}
    \label{fig:periodic_mmc_construction_ejor}
\end{figure}

\begin{lemma}[Recurrent realization of a rational fluid allocation]
\label{lem:multiroute_slot_linking_ejor}
Fix nonzero $\mathbf b$ and let $x$ be any rational allocation satisfying the
type-position balances of \eqref{eq:multitype_fluid_lp_ejor} for
$\mathbf v=\mathbf b$. There exist $y\in\mathbb N_+$ and $T=y\rho(x)$ such
that a finite $\mathbf b$-balanced closed walk $C_x$ satisfies
\begin{equation}
    \mathbf A(C_x)=y\mathbf b,
    \qquad
    w(C_x)=T,
    \qquad
    |C_x|=2y\sum_{i=1}^R b_iH_i.
    \label{eq:multiroute_slot_linking_statistics_ejor}
\end{equation}
Its lifted execution is $T$-periodic and returns to the same finite all-idle
projected state.
\end{lemma}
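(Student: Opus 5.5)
We follow the three-step period-shifted slot linking described above and verify that it produces a closed walk with the stated statistics. \emph{Step 1 (scaling and packing).} Since $x$ is rational, choose $y\in\mathbb N_+$ such that every $yx_{i,h,M}$ is a nonnegative integer. The type-position balances then give $\sum_{M}yx_{i,h,M}=yb_i$ for each $(i,h)$. On each machine $M$, place the $yx_{i,h,M}$ type-$(i,h)$ occurrences consecutively, with no idle time, starting at $0$. The total load on $M$ is $y\cdot\text{load}_M(x)\le y\rho(x)=T$, so all intervals fit in $[0,T)$ and are disjoint. Repeat this pattern with shift $T$ on $\mathbb Z$. Every processing time is a positive integer, so $T$ is rational and positive, and every slot has length at most $T$. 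Hence copies in consecutive periods of the same machine never overlap, and machine feasibility holds on the whole real line.

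\emph{Step 2 (lane linking and precedence).} For each $i$ and $h$, fix an arbitrary bijection between the $yb_i$ position-$h$ slots and the lanes $k=1,\dots,yb_i$. Define $z_{i,h,k}$ by \eqref{eq:period_offset_recursion_ejor}. The minimum is over a nonempty set, because $\varphi+dT\to\infty$, so every $z$ is a finite nonnegative integer. By construction, the position-$(h+1)$ slot used by generation $g$ starts no earlier than the completion of its position-$h$ slot, since the common term $gT$ cancels. This gives \eqref{eq:multiplicity_precedence_ejor} for every generation. Each period copy of each slot is owned by exactly one $(g,\text{lane})$ pair, because $g\mapsto g+z_{i,h,k}$ is a bijection. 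Consequently, every machine interval carries exactly one operation of exactly one job, and no interval is moved.

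\emph{Step 3 (state return and the walk).} To extract a finite walk, use a one-sided version of the infinite schedule: keep only generations $g\ge0$. This makes the early periods a filling transient. Pick $G\ge\max_{i,h,k}z_{i,h,k}+1$. Consider a period boundary $NT$ with $N\ge G$, perturbed if necessary to a boundary instant that is not inside a completion tie. At such a boundary, the projected state consists of the following: the waiting counts $n_{i,h}$ for $h\ge2$, the idle machines, and the active descriptors with their remaining times. These are determined by the slot pattern modulo $T$ together with the lane offsets, and not by $N$. Indeed, translating $g\mapsto g+1$ maps the configuration at $NT$ onto the configuration at $(N+1)T$. The one-sided truncation does not matter once every lane that is active or waiting at time $NT$ has generation index at least $0$, which holds for $N\ge G$.

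The main obstacle is the claim that the boundary state is \emph{all-idle}, meaning $\mathcal W=\emptyset$, as the lemma asserts. Step 1 already helps here: every slot lies inside $[0,T)$, so no slot straddles a multiple of $T$. At $NT$, every machine has therefore just completed or is idle, and the active set is empty. Jobs that are still waiting are recorded in $n_{i,h}$ for $h\ge2$; these exist precisely because some $z>0$, and they are finite in number. The boundary state is therefore a finite all-idle projected state. By Lemma~\ref{lem:type_position_connectors_ejor}, it is an ordinary vertex.

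Next we read off the walk. Proposition~\ref{prop:model_generated_ejor} applies to the portion of the schedule on $[NT,(N+1)T)$, with starts at $NT$ or at completion epochs; if necessary, first apply Lemma~\ref{lem:completion_event_dominance_ejor} only within the window after checking that it preserves the pattern. This portion is realized by a walk $C_x$ from the boundary state back to the same state. Its weight is the elapsed time, so $w(C_x)=T$. It contains one admission for each position-$1$ slot, so $\mathbf A(C_x)=y\mathbf b$, which makes it $\mathbf b$-balanced. Finally, \eqref{eq:multitype_cycle_accounting_ejor} gives $|C_x|=2y\sum_i b_iH_i$.

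The delicate point is making the walk's time increments land exactly on $NT$ when the last completion before the boundary is earlier than $NT$. In that case the idle tail of the period would have no $\mathcal F$ edge to account for it. We would handle this by taking the walk between two consecutive completion epochs that are congruent modulo $T$, rather than between literal multiples of $T$. Such a choice exists because the state is $T$-periodic. The states at these two epochs are translates of each other, and the elapsed time between them is still $T$. We also need the chosen epoch to have $\mathcal W=\emptyset$. If this fails, we would rotate the slot packing within each machine so that $0$ is a completion epoch of some machine; alternatively, we would relax the all-idle claim to ``same finite projected state'', and use this only through the connectors of Lemma~\ref{lem:type_position_connectors_ejor}.
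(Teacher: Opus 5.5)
Your construction is the paper's: clear denominators, pack each machine consecutively in $[0,T)$, link lanes by the offset recursion \eqref{eq:period_offset_recursion_ejor}, and translate generations $g\mapsto g+1$ between boundaries. You replace the bi-infinite generation index, and with it the paper's bound $z_{i,h,k}\le h-1$, by a one-sided schedule with $g\ge0$ and $N\ge G$. That is a harmless variation. For such $N$, every job unfinished at $NT$ and every operation in the window has $g\ge N-\max z\ge1$, so the truncation is invisible there.

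The gap is the point you flag as delicate and then leave open. You never show that each boundary $nT$ is a completion epoch. That fact is what makes the linked schedule a completion-event schedule, so Proposition~\ref{prop:model_generated_ejor} applies without moving anything. It is also what makes the last $\mathcal F$ increment land exactly on $(n+1)T$, giving $w(C_x)=T$, and what makes the endpoint the all-idle state.

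It follows in one line from your own Step 1, where you used only $\mathrm{load}_M\le T$. Since $T=y\rho(x)$ is the \emph{maximum} scaled load, and it is positive because $\mathbf b\neq\mathbf0$, a maximum-load machine is busy throughout $[0,T)$ and completes its last slot exactly at $T$. Hence every nonzero-phase slot starts at the completion of its machine predecessor, and every phase-zero slot starts at that boundary completion. Take the state after the boundary completions and before the phase-zero assignments: it is all-idle and recurs. No perturbation is needed, because the projected graph already serializes simultaneous completions through forced edges.

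Drop the fallbacks you propose. Relaxing the all-idle claim proves less than the lemma states. Completion epochs at a common interior phase generally have $\mathcal W\neq\emptyset$. Applying Lemma~\ref{lem:completion_event_dominance_ejor} inside the window would left-shift phase-zero slots (for instance position-$1$ ones) on machines whose load is below $T$. That would destroy the periodic pattern and the weight $T$.
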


\begin{proof}
Choose a common denominator $y$ such that
$a_{i,h,M}:=yx_{i,h,M}$ is integral for every type-position pair and eligible
machine. Every scaled machine load is then integral, so their maximum
$T=y\rho(x)$ is integral. Pack the occurrences assigned to each machine
consecutively in $[0,T)$ and repeat the resulting slot pattern every $T$ time
units. For job type $i$ with $b_i>0$, label its $yb_i$ slots at every position
by lanes
$k=1,\ldots,yb_i$; types with $b_i=0$ have no slots.

Apply \eqref{eq:period_offset_recursion_ejor} to every lane. Job
$J_{i,k,g}$ uses period copy $g+z_{i,h,k}$ of slot $(i,h,k)$. For each fixed
slot, $g\mapsto g+z_{i,h,k}$ is a bijection on $\mathbb Z$, and the recursion
enforces job precedence. Because all type-position-labelled slots were packed
before relabelling, linking cannot create a machine conflict. Every
nonzero-phase slot starts when the preceding slot on its machine completes. At
each period boundary, a maximum-load machine completes its last slot, so
phase-zero starts also occur at a completion epoch. The linked schedule is
therefore a completion-event schedule.

It remains only to verify finite state return. Let $c_{i,h,k,g}$ be the
completion time of position $h$ of $J_{i,k,g}$. At boundary $nT$, after
boundary completions and before phase-zero assignments, let $\eta$ record the
next position of each admitted generation, using $H_i+1$ when the job is
complete, and let $\beta$ count the resulting waiting jobs:
\begin{equation}
\begin{aligned}
    \eta_{i,k,g}(n)
    &:=\min\bigl(\{h:c_{i,h,k,g}>nT\}\cup\{H_i+1\}\bigr),
      &&g\leq n-1,\\
    \beta_{i,h}(n)
    &:=\sum_{k=1}^{yb_i}\sum_{g\leq n-1}
      \mathbf{1}\{\eta_{i,k,g}(n)=h\},
      &&h=2,\ldots,H_i.
\end{aligned}
    \label{eq:multiroute_boundary_inventory_ejor}
\end{equation}
Each slot finishes within its period, so the recursion increases
$z_{i,h,k}$ by at most one at each position and $z_{i,h,k}\leq h-1$. Hence
only generations $g>n-H_i$ can contribute at boundary $nT$. Moreover,
$c_{i,h,k,g+1}=c_{i,h,k,g}+T$, so
$\eta_{i,k,g+1}(n+1)=\eta_{i,k,g}(n)$ and every $\beta_{i,h}(n)$ repeats.
All machines are idle and the active set is empty at these boundaries; the
entire projected state is therefore finite and repeats. By
Lemma~\ref{lem:type_position_connectors_ejor}, this boundary state is
reachable and drainable. One period induces a finite closed walk with
admissions $y\mathbf b$ and weight $T$; its edge count follows from
\eqref{eq:multitype_cycle_accounting_ejor}.
\end{proof}

The bi-infinite generation index describes the steady pattern compactly; it
does not create an infinite boundary state. Each boundary has finite
unfinished inventory, so any finite number of periods can be placed between
finite connectors.

\subsection{Exact recurrent attainment of the fluid optimum}
\label{subsec:exact_fluid_attainment}

The preceding construction applies to any rational fluid allocation. Since the
fluid LP has rational data, it has a rational optimum, which gives the main
attainment result.

\begin{theorem}[Exact recurrent fluid attainment]
\label{thm:multitype_mmc_attainability_ejor}
For every nonzero $\mathbf b\in\mathbb N_0^R$, there exist
$y\in\mathbb N_+$ and a feasible $y\rho_F(\mathbf b)$-periodic
completion-event schedule that admits
$yb_i$ jobs of every type $i$ per period and returns to the same finite
all-idle projected state. One period induces a finite
$\mathbf b$-balanced closed walk $\mathcal C_{\mathbf b}$ satisfying
\begin{equation}
    \mathbf A(\mathcal C_{\mathbf b})=y\mathbf b,
    \qquad
    w(\mathcal C_{\mathbf b})=y\rho_F(\mathbf b),
    \qquad
    |\mathcal C_{\mathbf b}|
      =2y\sum_{i=1}^R b_iH_i.
    \label{eq:multitype_cycle_statistics_ejor}
\end{equation}
\end{theorem}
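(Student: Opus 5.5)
The plan is to reduce the theorem to Lemma~\ref{lem:multiroute_slot_linking_ejor} by supplying it with a rational optimal fluid allocation. With $\mathbf v=\mathbf b$, the fluid LP \eqref{eq:multitype_fluid_lp_ejor} has integer data: the processing times $p_{i,h}(M)\in\mathbb N_+$ and the right-hand sides $b_i\in\mathbb N_0$.

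First I would check that this LP has an optimum. It is feasible, since every eligible set is nonempty; for example, put all of $b_i$ on one eligible machine at each position. It is also bounded below, because $\rho\geq0$ at every feasible point. A feasible LP that is bounded below attains its minimum at a vertex of its feasible polyhedron; this polyhedron is pointed because $x\geq0$ and $\rho$ is bounded below by the loads. A vertex is the unique solution of a nonsingular subsystem of the constraints with integer coefficients, so Cramer's rule makes it rational. This yields a rational optimal fluid allocation $x^*$ with $\rho(x^*)=\rho_F(\mathbf b)$.

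The one point to verify is that the $\rho$ in \eqref{eq:multiroute_allocation_load_ejor} equals the LP objective at $x^*$. At an optimum, $\rho$ equals the largest machine load: if it were strictly larger, decreasing it would keep feasibility and lower the objective. Hence $\rho(x^*)=\rho_F(\mathbf b)$.

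Next I would apply Lemma~\ref{lem:multiroute_slot_linking_ejor} to $x^*$. This gives $y\in\mathbb N_+$, the period $T=y\rho(x^*)=y\rho_F(\mathbf b)$, and a $\mathbf b$-balanced closed walk $C_{x^*}$ with the three statistics in \eqref{eq:multiroute_slot_linking_statistics_ejor}. Its lifted execution is $T$-periodic and returns to the same finite all-idle projected state. Setting $\mathcal C_{\mathbf b}:=C_{x^*}$ gives \eqref{eq:multitype_cycle_statistics_ejor} directly.

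The remaining claims come from the lemma's proof. The schedule is a completion-event schedule: nonzero-phase slots start at the completion of the preceding slot, and phase-zero starts coincide with the completion of a maximum-load machine at the boundary. Feasibility holds because machine intervals are packed disjointly before linking, and the period-offset recursion \eqref{eq:period_offset_recursion_ejor} enforces precedence. Per period there are $yb_i$ lanes of type $i$, and each admits exactly one new generation, which gives the admission count $yb_i$.

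I expect no serious obstacle. The only delicate point is the degenerate case $\rho_F(\mathbf b)=0$, and it cannot occur: $\mathbf b\neq\mathbf0$ and processing times are positive, so some machine carries positive load. Hence $T>0$ and the period is well defined.
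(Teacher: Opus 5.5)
Your proposal is correct and follows the paper's proof. Like the paper, you take a rational optimal fluid allocation $x^*$ (rational because the LP has integer data), note that $\rho(x^*)=\rho_F(\mathbf b)$, and invoke Lemma~\ref{lem:multiroute_slot_linking_ejor}; your additional checks (vertex rationality via Cramer's rule, tightness of $\rho$ at the optimum, and $\rho_F(\mathbf b)>0$ so the period is nondegenerate) fill in details the paper leaves implicit.
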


\begin{proof}
The fluid LP has a rational optimal allocation $x^*$. Since
$\rho(x^*)=\rho_F(\mathbf b)$,
Lemma~\ref{lem:multiroute_slot_linking_ejor} gives the stated periodic
execution and closed walk. The edge count follows from
\eqref{eq:multitype_cycle_accounting_ejor}.
\end{proof}

\paragraph{Admission-normalized closed-walk consequence.}
For every $k\in\mathbb N_+$, any feasible schedule containing at least $kb_i$
jobs of every type $i$ satisfies
\begin{equation}
    C_{\max}\geq k\rho_F(\mathbf b).
    \label{eq:multitype_fluid_lower_bound_ejor}
\end{equation}
Indeed, selecting $kb_i$ jobs of each type, counting their machine
assignments, and dividing by $k$ gives a feasible fluid allocation for
$\mathbf b$ whose maximum machine load is at most $C_{\max}/k$.

Now let $C$ be any $\mathbf b$-balanced closed walk with
$\mathbf A(C)=y_C\mathbf b$.
Lemma~\ref{lem:type_position_connectors_ejor} places $C^N$ between fixed
filling and draining paths, giving a finite schedule of makespan
$Nw(C)+K_C$, where $K_C$ is independent of $N$. Applying
\eqref{eq:multitype_fluid_lower_bound_ejor} with $k=Ny_C$ and letting
$N\to\infty$ yields $w(C)/y_C\geq\rho_F(\mathbf b)$. Conversely,
Theorem~\ref{thm:multitype_mmc_attainability_ejor} constructs
$\mathcal C_{\mathbf b}$ with normalized weight exactly
$\rho_F(\mathbf b)$. Hence
\begin{equation}
    \inf_{\substack{C\text{ closed and }\mathbf b\text{-balanced}}}
      \frac{w(C)}{y_C}
    =\rho_F(\mathbf b),
    \label{eq:multitype_fluid_equals_mmc_ejor}
\end{equation}
and the infimum is attained by $\mathcal C_{\mathbf b}$.

\subsection{From recurrent attainment to finite fixed-composition schedules}
\label{subsec:finite_fixed_mix_guarantee}

For a finite demand $q\mathbf b+\mathbf r$, the attaining period need not fit
an integer number of times. We therefore repeat the recurrent core as many
times as possible and absorb the fixed offset, the remainder modulo $y$, and
the entry and exit connectors into bounded boundary pieces. Only the number of
recurrent periods grows with $q$.

Fix the cycle $\mathcal C_{\mathbf b}$ from
Theorem~\ref{thm:multitype_mmc_attainability_ejor}, with multiplier $y$.
At one all-idle boundary, let
$\boldsymbol\beta=(\beta_1,\ldots,\beta_R)^\top$ count unfinished jobs by
job type, and set
\begin{equation}
    \sigma_{\boldsymbol\beta}
    :=\max_{i:b_i>0}\left\lceil\frac{\beta_i}{b_i}\right\rceil.
    \label{eq:multitype_boundary_scale_ejor}
\end{equation}
For any fixed $\mathbf r\in\mathbb N_0^R$, define the $q$-independent constant
\begin{equation}
    \Gamma_{\mathbf b,\mathbf r}
    :=P(\mathbf r)
      +(\sigma_{\boldsymbol\beta}+y-1)
       \bigl(P(\mathbf b)-\rho_F(\mathbf b)\bigr).
    \label{eq:multitype_additive_constant_ejor}
\end{equation}

\begin{theorem}[Uniform additive bound under fixed-composition growth]
\label{thm:multitype_additive_bound_ejor}
For every $q\in\mathbb N_0$, there is a feasible completion-event schedule
whose makespan $\widehat C_{\max}(q;\mathbf b,\mathbf r)$ satisfies
\begin{equation}
\begin{aligned}
    q\rho_F(\mathbf b)
    &\leq \rho_F(\boldsymbol\nu(q))
     =\mathrm{LB}_F(\boldsymbol\nu(q))\\
    &\leq C_{\max}^*(\boldsymbol\nu(q))
     \leq\widehat C_{\max}(q;\mathbf b,\mathbf r)\\
    &\leq q\rho_F(\mathbf b)+\Gamma_{\mathbf b,\mathbf r}.
\end{aligned}
\label{eq:multitype_additive_bound_ejor}
\end{equation}
Here $\mathrm{LB}_F(\boldsymbol\nu(q))$ is evaluated for the complete finite
demand vector $q\mathbf b+\mathbf r$ and therefore includes every offset copy
in $\mathbf r$, including job types with $b_i=0$. Consequently,
\begin{equation}
\begin{aligned}
    0\leq
    \widehat C_{\max}(q;\mathbf b,\mathbf r)
      -C_{\max}^*(\boldsymbol\nu(q))\\
    &\leq
    \widehat C_{\max}(q;\mathbf b,\mathbf r)
      -\mathrm{LB}_F(\boldsymbol\nu(q))
    \leq\Gamma_{\mathbf b,\mathbf r},
    \qquad
    \lim_{q\to\infty}
    \frac{\widehat C_{\max}(q;\mathbf b,\mathbf r)}
         {C_{\max}^*(\boldsymbol\nu(q))}
    =1.
\end{aligned}
    \label{eq:multitype_asymptotic_ratio_ejor}
\end{equation}
\end{theorem}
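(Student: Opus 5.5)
Most of the chain in \eqref{eq:multitype_additive_bound_ejor} is already available, so the substantive work is an explicit upper-bound construction. The first inequality $q\rho_F(\mathbf b)\le\rho_F(\boldsymbol\nu(q))$ follows from positive homogeneity and componentwise monotonicity of $\rho_F$, since $q\mathbf b\le q\mathbf b+\mathbf r$. The identity $\rho_F(\boldsymbol\nu(q))=\mathrm{LB}_F(\boldsymbol\nu(q))$ is definitional. The bound $\mathrm{LB}_F\le C^*_{\max}$ was shown in Section~\ref{sec:problem}. The inequality $C^*_{\max}\le\widehat C_{\max}$ holds for any feasible schedule. Once the final upper bound is proved, \eqref{eq:multitype_asymptotic_ratio_ejor} follows by subtraction. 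The ratio limit comes from $C^*_{\max}(\boldsymbol\nu(q))\ge q\rho_F(\mathbf b)$, where $\rho_F(\mathbf b)>0$ because $\mathbf b\neq\mathbf 0$ and processing times are positive. This gives $1\le\widehat C_{\max}/C^*_{\max}\le 1+\Gamma_{\mathbf b,\mathbf r}/(q\rho_F(\mathbf b))\to1$.

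For the construction, I would take $\mathcal C_{\mathbf b}$ from Theorem~\ref{thm:multitype_mmc_attainability_ejor} and its all-idle boundary state $v$, which has inventory $\boldsymbol\beta$. A schedule built as filling path, then $\mathcal C_{\mathbf b}^N$, then draining path processes $\boldsymbol\beta+Ny\mathbf b$ jobs in total. Any jobs left over are run serially afterwards on fastest machines; serialising whole jobs is always feasible when the shop is otherwise idle. By Lemma~\ref{lem:type_position_connectors_ejor} the connectors cost $P(\boldsymbol\beta)$, so the makespan is $Ny\rho_F(\mathbf b)+P(\boldsymbol\beta)+P(\text{leftover})$.

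The key accounting step is to choose $N$ so the connector inventory is drawn from the growing demand. I would take $N=\lfloor (q-\sigma_{\boldsymbol\beta})/y\rfloor$ when $q\ge\sigma_{\boldsymbol\beta}$. Then $\beta_i\le\sigma_{\boldsymbol\beta}b_i$ for every $i$ with $b_i>0$. For $b_i=0$, lanes exist only for types with $b_i>0$, so $\beta_i=0$. Hence $\boldsymbol\beta+Ny\mathbf b\le(\sigma_{\boldsymbol\beta}+Ny)\mathbf b\le q\mathbf b$, and the leftover demand is $\mathbf r+(q-Ny)\mathbf b-\boldsymbol\beta\ge\mathbf 0$.

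The total serial work is then
\[
P(\boldsymbol\beta)+P(\mathbf r)+(q-Ny)P(\mathbf b)-P(\boldsymbol\beta)=P(\mathbf r)+(q-Ny)P(\mathbf b).
\]
Adding the cycle time, writing $q-Ny=s$ and using $Ny=q-s$, the makespan is
\[
q\rho_F(\mathbf b)+P(\mathbf r)+s\bigl(P(\mathbf b)-\rho_F(\mathbf b)\bigr).
\]
Since $s\le\sigma_{\boldsymbol\beta}+y-1$ and $P(\mathbf b)\ge\rho_F(\mathbf b)$, this is at most $q\rho_F(\mathbf b)+\Gamma_{\mathbf b,\mathbf r}$. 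To see $P(\mathbf b)\ge\rho_F(\mathbf b)$, note that fastest-machine allocation is fluid-feasible and its maximum load is at most the total work $P(\mathbf b)$.

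When $q<\sigma_{\boldsymbol\beta}$, I would skip the cycle entirely and serialise all jobs. The makespan is $P(\mathbf r)+qP(\mathbf b)=q\rho_F(\mathbf b)+P(\mathbf r)+q(P(\mathbf b)-\rho_F(\mathbf b))$, which is within $\Gamma_{\mathbf b,\mathbf r}$ of $q\rho_F(\mathbf b)$ because $q\le\sigma_{\boldsymbol\beta}-1$. Finally, Lemma~\ref{lem:completion_event_dominance_ejor} converts the concatenated schedule into a completion-event schedule without increasing makespan.

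The main obstacle is verifying the concatenation precisely. The draining path must process exactly the inventory $\boldsymbol\beta$ present at the final boundary. The filling path must admit exactly $\boldsymbol\beta$ jobs, so the total job count matches, which is what the identity \eqref{eq:multitype_connector_identity_ejor} provides. I also need to check that the draining path and the serial leftover phase do not overlap. I would handle this by appending the leftover jobs strictly after draining ends at $v_0$, accepting the slightly loose additive bound.
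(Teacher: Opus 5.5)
Your proposal is correct and follows essentially the same route as the paper: the same lower-bound chain, the same choice $N=\lfloor (q-\sigma_{\boldsymbol\beta})/y\rfloor$ with leftover multiplier $s=q-Ny=\sigma_{\boldsymbol\beta}+\ell$, serial connectors costing $P(\boldsymbol\beta)$, and the same all-serial fallback for $q<\sigma_{\boldsymbol\beta}$. The only differences are cosmetic and leave the makespan accounting unchanged: you run the serial leftover jobs after draining rather than before filling, and you invoke Lemma~\ref{lem:completion_event_dominance_ejor} to secure the completion-event property.
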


\begin{proof}
\emph{Fluid lower bounds.}
Positive homogeneity, demand monotonicity, and
\eqref{eq:fluid_lower_bound_definition_ejor} give
\[
    q\rho_F(\mathbf b)=\rho_F(q\mathbf b)
    \leq\rho_F(q\mathbf b+\mathbf r)
    \leq C_{\max}^*(\boldsymbol\nu(q)).
\]

\emph{Fixed boundary pieces.}
At the selected boundary, a serial filling connector creates each unfinished
job by processing its operation-sequence prefix, and a serial draining
connector completes its suffix. Equation~
\eqref{eq:multitype_connector_identity_ejor} gives
\[
    T_{\mathrm{in}}+T_{\mathrm{out}}
    =P(\boldsymbol\beta).
\]
The periodic construction has no boundary job with $b_i=0$, and the definition
of $\sigma_{\boldsymbol\beta}$ gives
$\sigma_{\boldsymbol\beta}\mathbf b-\boldsymbol\beta\geq\mathbf0$.

\emph{Stitching.}
For $q\geq\sigma_{\boldsymbol\beta}$, write
\[
    q-\sigma_{\boldsymbol\beta}=Ny+\ell,
    \qquad
    N\in\mathbb N_0,\quad 0\leq\ell<y.
\]
Process
\[
    \mathbf r
    +(\sigma_{\boldsymbol\beta}\mathbf b-\boldsymbol\beta)
    +\ell\mathbf b
\]
jobs serially, execute the filling connector, repeat
$\mathcal C_{\mathbf b}$ $N$ times, and drain. The admission vectors sum to
$\mathbf r+q\mathbf b=\boldsymbol\nu(q)$.

Figure~\ref{fig:finite_demand_mmc_stitching_ejor} summarizes this stitching:
only the recurrent core is repeated with $q$, whereas the serial remainder and
the filling and draining connectors remain bounded.
\begin{figure}[!t]
    \centering
    \includegraphics[width=\textwidth,trim=5bp 2bp 2bp 2bp,clip]
    {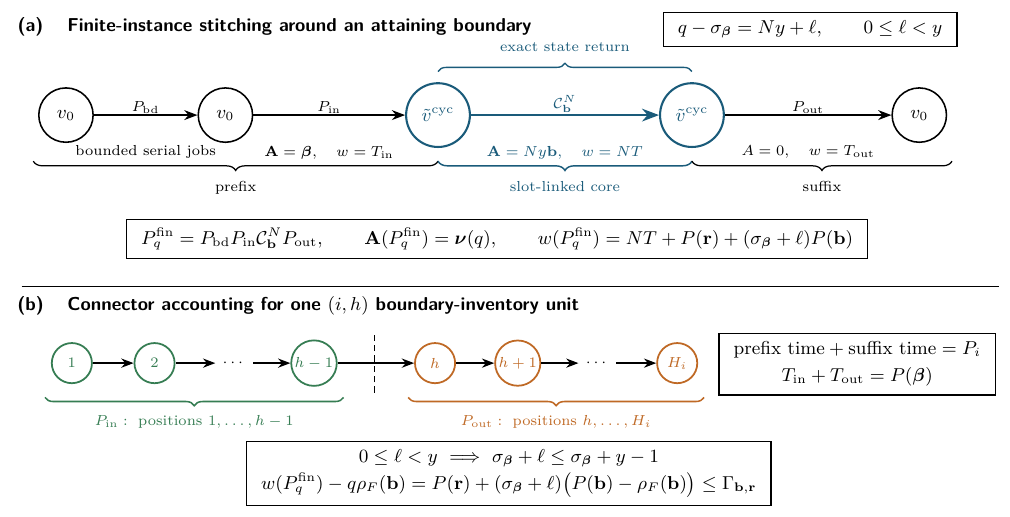}
    \setlength{\abovecaptionskip}{4pt}
    \caption{Finite-instance stitching of bounded boundary pieces around repeated
$\mathbf b$-balanced attaining cycles.}
    \label{fig:finite_demand_mmc_stitching_ejor}
\end{figure}

Using $w(\mathcal C_{\mathbf b})=y\rho_F(\mathbf b)$,
\begin{align*}
    \widehat C_{\max}(q;\mathbf b,\mathbf r)
    &=P(\mathbf r)
      +P(\sigma_{\boldsymbol\beta}\mathbf b-\boldsymbol\beta)
      +\ell P(\mathbf b)+T_{\mathrm{in}}
      +Ny\rho_F(\mathbf b)+T_{\mathrm{out}}\\
    &=q\rho_F(\mathbf b)+P(\mathbf r)
      +(\sigma_{\boldsymbol\beta}+\ell)
       \bigl(P(\mathbf b)-\rho_F(\mathbf b)\bigr)\\
    &\leq q\rho_F(\mathbf b)+\Gamma_{\mathbf b,\mathbf r}.
\end{align*}
Here $\ell\leq y-1$ and
$P(\mathbf b)\geq\rho_F(\mathbf b)$ because the serial allocation is
fluid-feasible. If $q<\sigma_{\boldsymbol\beta}$, serially processing all jobs
costs $P(\mathbf r)+qP(\mathbf b)$ and obeys the same bound because
$q\leq\sigma_{\boldsymbol\beta}-1$. Feasibility gives
\[
    C_{\max}^*(\boldsymbol\nu(q))
    \leq\widehat C_{\max}(q;\mathbf b,\mathbf r).
\]
The additive conclusions follow from the full chain, and the ratio converges
to one because $\rho_F(\mathbf b)>0$.
\end{proof}

The result permits zero components of $\mathbf b$, so only selected job types
need grow. It extends fixed-route fluid-to-finite guarantees
\citep{bertsimas2002fluid} to flexible machine assignment. The attaining
recurrent schedule is a structural certificate rather than a claim of
finite-instance optimality: for a particular finite target, its
denominator-clearing batch may be unnecessarily large. This motivates the
finite-horizon replay method in Section~\ref{sec:tctr}.
\section{Finite-Horizon Scheduling with Type-Based Cyclic Template Replay}
\label{sec:tctr}

The exact recurrent construction of Section~\ref{sec:mmc} identifies a
repeated job-type structure that attains the fluid workload, but its
denominator-clearing batch may be too coarse for a particular finite target.
Type-based cyclic template replay (TCTR) retains machine-load balancing by job
type and repeated job-type ordering while relaxing exact period
and projected-state return. It therefore searches smaller representative
patterns directly on the finite demand and constructs the complete schedule by
replay.

Figure~\ref{fig:bridge} summarizes the transition from the exact recurrent
construction of Section~\ref{sec:mmc} to the finite-horizon scheduling approach
developed below.
\begin{figure}[!ht]
    \centering
    \includegraphics[width=\textwidth]{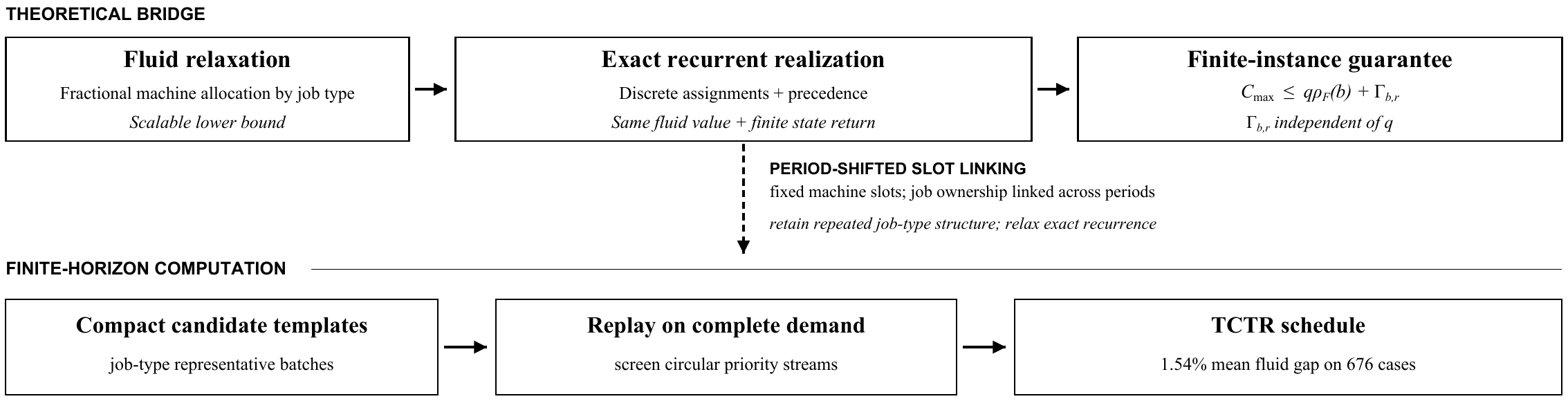}
    \setlength{\abovecaptionskip}{4pt}
    \caption{From exact recurrent fluid attainment to finite-horizon
    scheduling.}
    \label{fig:bridge}
\end{figure}
\FloatBarrier

\subsection{From recurrent structure to finite replay}
\label{subsec:tctr_exact_relaxation}

For target multiplicities $\boldsymbol\nu$, a candidate batch
$\mathbf z\in\mathbb N_0^R$ satisfies $z_i>0$ exactly when $\nu_i>0$ and
contains $z_i$ occurrences of every operation position of job type $i$.
These counts support repeated replay by job type but, unlike the cycles of
Section~\ref{sec:mmc}, do not imply projected-state return.
We therefore use \emph{circular priority stream} for the repeated ordering and
reserve \emph{cycle} for a schedule with verified state return.

TCTR proceeds in four steps. First, it generates a finite nonempty set
$\mathcal Z(\boldsymbol\nu)$ of candidate batches satisfying this condition. Second, a compact
integer model assigns the type-position occurrences of each batch to eligible
machines. Third, the resulting machine-labeled multiset is arranged into a
finite set of circular priority streams and screened by replay on the target instance.
Finally, the best screened type-position order is replayed once with
earliest-completion-time (ECT) machine selection, and the better schedule is
returned. Both candidate generation and replay use $\boldsymbol\nu$ directly;
no decomposition into a growth vector and fixed offset is required.

Figure~\ref{fig:tctr_template_replay_ejor} illustrates the key distinction:
stream wrap-around repeats the priority ordering but does not impose
projected-state return.

\begin{figure}[H]
    \centering
    \includegraphics[width=\textwidth,trim=4bp 5bp 5bp 4bp,clip]{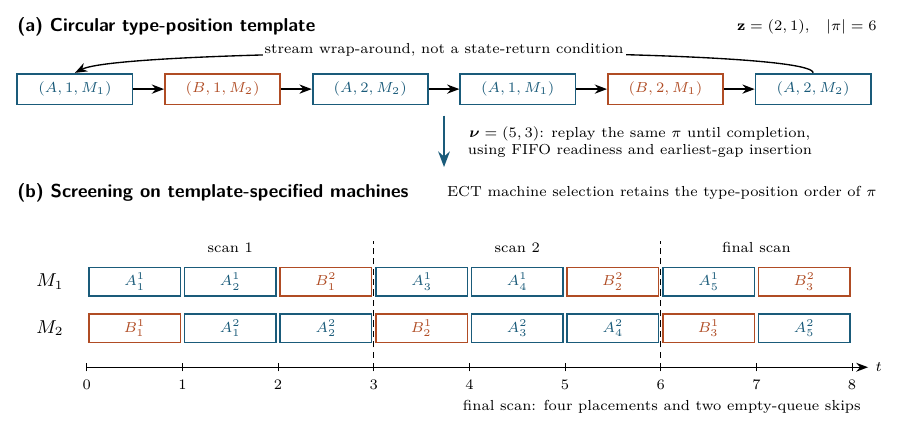}
    \setlength{\abovecaptionskip}{4pt}
    \caption{TCTR replay for $\boldsymbol\nu=(5,3)$ and $\mathbf z=(2,1)$.
    Screening on the template-specified machines preserves those machine
    assignments; ECT machine selection preserves the type-position order but
    may choose different machines.}
    \label{fig:tctr_template_replay_ejor}
\end{figure}

\subsection{Candidate templates and circular streams}
\label{subsec:tctr_replay}
\label{subsec:tctr_allocation}

For each $\mathbf z\in\mathcal Z(\boldsymbol\nu)$, assign its type-position
occurrences to eligible machines by solving
\begin{align}
    T_{\mathrm{tpl}}(\mathbf z):=
    \min_{\vartheta,\mathbf x^{\mathrm{tpl}}}\quad
    &\vartheta
    \label{eq:tctr_integer_template_ejor}\\
    \text{subject to}\quad
    &\sum_{M\in\mathcal E_{i,h}}x^{\mathrm{tpl}}_{i,h,M}=z_i,
    &&i=1,\ldots,R,\ h=1,\ldots,H_i,\notag\\
    &\sum_{i=1}^R
      \sum_{\substack{1\leq h\leq H_i\\M\in\mathcal E_{i,h}}}
      p_{i,h}(M)x^{\mathrm{tpl}}_{i,h,M}\leq\vartheta,
    &&M\in\mathcal M,\notag\\
    &x^{\mathrm{tpl}}_{i,h,M}\in\mathbb N_0.\notag
\end{align}
This model is the integral counterpart of
\eqref{eq:multitype_fluid_lp_ejor}: it preserves type-position balance and
minimizes the largest assigned machine workload, while job identities,
precedence, and finite start times are deferred to replay. Each feasible
incumbent therefore defines a machine-labeled multiset containing $z_i$
occurrences of every operation position of job type $i$.

For a fixed shop structure, the model has one count variable per
eligible type-position-machine triple, so its numbers of variables and
constraints are independent of the target multiplicity scale.

Expand the counts into symbols $(i,h,\widetilde M)$, where $\widetilde M$ is
the assigned template machine label. Deterministic orderings and circular
rotations produce a finite set of candidate streams from this multiset; the
streams contain the same symbols and differ only in circular order. The candidate-batch formulas,
ordering keys, rotations, tie-breaking rules, and reuse protocol used in the
experiments are specified in the online supplement.

\subsection{Replay, screening, and ECT refinement}
\label{subsec:tctr_template}

For each type-position pair $(i,h)$, let $Q_{i,h}^{\mathrm{ready}}$ be the FIFO
queue of job copies whose next unscheduled operation is at position
$h$. For a copy $j=J_{i,k}$, let $\tau_j$ be its ready time. Initially,
$Q_{i,1}^{\mathrm{ready}}$ contains $J_{i,1},\ldots,J_{i,\nu_i}$ in index
order, all later queues are empty, every $\tau_j=0$, and all machine intervals
are free.

Scan a stream $\pi$ circularly. At symbol $(i,h,\widetilde M)$, skip the
symbol when $Q_{i,h}^{\mathrm{ready}}$ is empty. Otherwise remove its first
copy $j=J_{i,k}$ and compute, for every eligible machine
$M\in\mathcal E_{i,h}$,
\begin{equation}
    s_{j,h,M}
    :=\min\left\{
        t\geq\tau_j:
        [t,t+p_{i,h}(M))\text{ is free on }M
    \right\}.
    \label{eq:tctr_earliest_gap_ejor}
\end{equation}

To screen a candidate stream, insert each operation on its template-specified
machine $\widetilde M$ at $s_{j,h,\widetilde M}$. After the best stream is
selected, one additional replay preserves its type-position order but chooses
an eligible machine by ECT machine selection, minimizing
$s_{j,h,M}+p_{i,h}(M)$. The deterministic ECT tie-breaking rule is specified
in the supplement.

After either insertion, set $\tau_j$ to the completion time of the inserted
operation and append $j$ to $Q^{\mathrm{ready}}_{i,h+1}$ unless $h=H_i$.
\enlargethispage{\baselineskip}
Replay stops after
\[
    N_{\mathrm{op}}(\boldsymbol\nu)
    :=\sum_{i=1}^R\nu_iH_i
\]
insertions. TCTR returns the better of the screened schedule and the schedule
obtained by ECT machine selection for the selected stream.

\begin{algorithm}[H]
\caption{Type-based cyclic template replay (TCTR)}
\label{alg:tctr_ejor}
\begin{algorithmic}[1]
\Require Fixed shop structure, multiplicities $\boldsymbol\nu$, finite
         candidate set $\mathcal Z(\boldsymbol\nu)$ with
         $z_i>0$ exactly when $\nu_i>0$
\Ensure Feasible completion-event schedule $\mathsf S^{\mathrm{TCTR}}$
\State $\mathcal C^{\mathrm{lab}}\gets\emptyset$
\ForAll{$\mathbf z\in\mathcal Z(\boldsymbol\nu)$}
    \State Optimize \eqref{eq:tctr_integer_template_ejor} and retain a feasible
           incumbent $\mathbf x^{\mathrm{tpl}}$
    \State Construct the finite set of candidate circular streams for
           $(\mathbf z,\mathbf x^{\mathrm{tpl}})$
    \ForAll{candidate stream $\pi$}
        \State Replay $\pi$ on the template-specified machines using
               earliest-gap insertion;
               add the result to $\mathcal C^{\mathrm{lab}}$
    \EndFor
\EndFor
\State Select the stream $\pi^\star$ with the smallest screened makespan
\State Replay the type-position order of $\pi^\star$ with ECT machine selection
       to obtain
       $\mathsf S^{\mathrm{ECT}}$
\State \Return the better of the screened schedule for $\pi^\star$ and
       $\mathsf S^{\mathrm{ECT}}$
\end{algorithmic}
\end{algorithm}

\subsection{Feasibility and computational scope}
\label{subsec:tctr_complexity}

\begin{proposition}[Feasibility of TCTR replay]
\label{prop:tctr_replay_feasibility_ejor}
Suppose $\mathcal Z(\boldsymbol\nu)$ is finite and nonempty, every retained
batch satisfies $z_i>0$ exactly when $\nu_i>0$, and
\eqref{eq:tctr_integer_template_ejor} has a feasible incumbent for each batch.
Algorithm~\ref{alg:tctr_ejor} terminates and
returns a feasible completion-event schedule for $\boldsymbol\nu$.
\end{proposition}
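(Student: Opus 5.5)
We need to establish three things: termination, feasibility of every replayed schedule, and that the returned schedule is a completion-event schedule. The plan treats a single replay of a stream $\pi$, under either the template-machine rule or ECT selection, as the basic object. Every property is proved for that object, and the statement then follows because Algorithm~\ref{alg:tctr_ejor} performs finitely many such replays and returns one of them.

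\emph{Termination and completeness of a single replay.} The first step is a progress claim. While fewer than $N_{\mathrm{op}}(\boldsymbol\nu)$ insertions have been made, some queue $Q^{\mathrm{ready}}_{i,h}$ is nonempty. This holds because every unfinished copy sits in exactly one ready queue: it enters $Q_{i,1}$ initially and moves to $Q_{i,h+1}$ after each insertion at $h<H_i$.

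Next we need that every type-position pair with $\nu_i>0$ occurs in $\pi$. By hypothesis $z_i>0$ whenever $\nu_i>0$, and the template counts satisfy $\sum_M x^{\mathrm{tpl}}_{i,h,M}=z_i$. So every pair $(i,h)$ with $\nu_i>0$ contributes at least one symbol. Hence one full circular pass over $\pi$, which has finite length $\sum_i z_iH_i$, performs at least one insertion. The replay therefore stops after at most $N_{\mathrm{op}}\cdot|\pi|$ symbol visits.

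The minimum in \eqref{eq:tctr_earliest_gap_ejor} exists because finitely many intervals are occupied. The set of feasible $t$ is therefore nonempty (any $t$ beyond the last occupied interval works) and is a finite union of closed-from-the-left intervals, so its infimum is attained. Since $\mathcal Z(\boldsymbol\nu)$ is finite, each template yields finitely many streams, and the ECT step is one extra replay, the algorithm terminates.

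\emph{Feasibility.} Each copy $J_{i,k}$ leaves $Q_{i,h}$ exactly once per position and in the order $h=1,\dots,H_i$. Therefore every operation is scheduled exactly once, on a machine in $\mathcal E_{i,h}$: in screening the template machine $\widetilde M$ is eligible by construction of \eqref{eq:tctr_integer_template_ejor}, and in ECT the choice is restricted to $\mathcal E_{i,h}$.

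Precedence \eqref{eq:multiplicity_precedence_ejor} holds because the start time satisfies $s\ge\tau_j$, and $\tau_j$ is the completion time of the previous operation. Machine disjointness holds because each insertion uses an interval that is free at insertion time, and intervals are never moved afterwards.

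\emph{Completion-event property.} I expect this to be the main obstacle, since earliest-gap insertion may place an operation in an earlier gap than later-scanned ones. The argument I would try first is that the minimizing $t$ in \eqref{eq:tctr_earliest_gap_ejor} is the left endpoint of a maximal feasible interval. At such a $t$, either $t=\tau_j$ or $t$ equals the right endpoint of an occupied interval on $M$. In the first case $\tau_j$ is either $0$ or the completion time of $O_{i,k,h-1}$. In the second case $t$ is the completion time of an operation already on $M$. Both cases are completion epochs of operations in the final schedule, because inserted operations are never removed or shifted.

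If this direct argument had a gap, the fallback would be to apply Lemma~\ref{lem:completion_event_dominance_ejor}, which left-shifts the schedule to a completion-event schedule without increasing makespan. The direct argument should suffice, however, so the returned schedule — the better of two feasible completion-event schedules — has the claimed properties.
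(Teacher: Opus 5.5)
Your proposal is correct and follows the paper's proof essentially step for step. Termination comes from the queue invariant together with the presence of every active type-position pair in the stream. Earliest-gap insertion gives precedence and machine disjointness. The completion-event property follows because the earliest feasible start is either $\tau_j$ or the right endpoint of an occupied interval.

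You add details the paper leaves implicit: that the minimum in \eqref{eq:tctr_earliest_gap_ejor} is attained, and why $z_i>0$ forces a symbol for every active pair. You also conclude directly from Definition~\ref{def:completion_event_ejor}, where the paper cites Proposition~\ref{prop:model_generated_ejor}. Your fallback via Lemma~\ref{lem:completion_event_dominance_ejor} is not needed.
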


\begin{proof}
Template-specified and ECT-selected machines are eligible by construction, and earliest-gap
insertion preserves job precedence and machine capacity. After each insertion,
every unfinished job belongs to the queue of its next unscheduled position.
Hence, while work remains, at least one ready queue is nonempty. Because every
active type-position pair occurs in the stream, a complete scan schedules at
least one operation, so finitely many scans place all
$N_{\mathrm{op}}(\boldsymbol\nu)$ operations.

For any selected machine, the earliest feasible start is either $\tau_j$ or
the right endpoint of an occupied machine interval. Since $\tau_j$ is zero or
a predecessor completion, every inserted start is zero or an
operation-completion epoch. Proposition~\ref{prop:model_generated_ejor} then
gives a feasible completion-event schedule.
\end{proof}

Replay explicitly constructs all $N_{\mathrm{op}}(\boldsymbol\nu)$ operations
and can require at most that many complete stream scans, so schedule
construction is necessarily output sensitive. ECT adds only one replay of the
selected stream rather than one replay per candidate. Unlike the exact
attaining cycles of Section~\ref{sec:mmc}, the circular streams need not return
to a projected state; their finite feasibility follows from the replay procedure.

\section{Computational Study}
\label{sec:experiments}

The computational study complements the theory in two ways. First, a
controlled instance and a dense fixed-composition study examine how the recurrent
fluid structure manifests itself on finite horizons. Second, a broad public
benchmark and a nested component study evaluate TCTR as a finite-horizon
scheduling method and identify the mechanisms behind its performance. The
public studies draw on 169 base instances from the Barnes, Brandimarte, Dauzere,
and Hurink E/R/V families
\citep{mastrolilli2000,brandimarte1993,dauzere1997,hurink1994}.

\subsection{Experimental design and evaluation metrics}

Unless stated otherwise, TCTR follows Section~\ref{sec:tctr}; detailed batch
formulas, stream orders, rotations, tie-breaking rules, and reuse protocols are
reported in the supplement. The public studies solve the TCTR template model
with CP-SAT; a feasible incumbent suffices to define a TCTR template.

We compare TCTR with FIFO-EGI, SPT-EGI, and MWKR-EGI dispatching rules, an
independently initialized job-indexed adaptive large-neighborhood search
(J-ALNS), a job-indexed hybrid genetic algorithm (HGA), and an independent
job-indexed CP-SAT model where applicable. J-ALNS and HGA receive neither
fluid-allocation information nor TCTR information. Study-specific time budgets
are stated below.

The principal quality measure is the relative gap to the complete finite-demand
fluid lower bound,
\begin{equation}
    \operatorname{Gap}_{F}
    :=\frac{C_{\max}-\mathrm{LB}_F}{\mathrm{LB}_F}\times100\%.
    \label{eq:experimental_fluid_gap}
\end{equation}
Here $\mathrm{LB}_F=\rho_F(\boldsymbol\nu)$ is recomputed for every realized
finite multiplicity vector. Along
$\boldsymbol\nu(q)=q\mathbf b+\mathbf r$, $\rho_F(\mathbf b)$ is the
attainable recurrent slope, whereas
$\rho_F(q\mathbf b+\mathbf r)$ is the complete finite-demand benchmark used in
$\operatorname{Gap}_F$. The fluid allocation itself is not supplied to any
heuristic. All methods are run on the same workstation; additional
implementation details are given in the supplement.

\subsection{Exact recurrence versus finite-horizon performance}
\label{subsec:four_way_state_comparison}

The exact recurrent construction of Section~\ref{sec:mmc} is a structural
certificate rather than a finite-horizon optimum. We illustrate this distinction
on a controlled instance with one job type, three operation positions, two
machines, and
\begin{equation}
    \mathbf P=
    \begin{pmatrix}
        10&4\\
        5&3\\
        9&7
    \end{pmatrix}.
    \label{eq:mmc_tctr_state_instance}
\end{equation}
For $N=160$, $\rho_F(\mathbf e_1)=63/8$ and
$\mathrm{LB}_F=1260$. The exact slot-linked construction uses the
$(y,T)=(8,63)$ attaining cycle and gives
$C_{\max}^{\mathrm{slot}}=1309$. TCTR selects $z=8$ and gives
$C_{\max}^{\mathrm{TCTR}}=1267$, while both a job-type DP and an
independent job-indexed CP-SAT model certify
$C_{\max}^{*}=1263$.

Figure~\ref{fig:mmc_tctr_optimal_transition_graph} compares the projected-state
supports of these four schedules.

\begin{figure}[H]
    \centering
    \includegraphics[width=\textwidth,trim=0bp 2bp 1bp 0bp,clip]{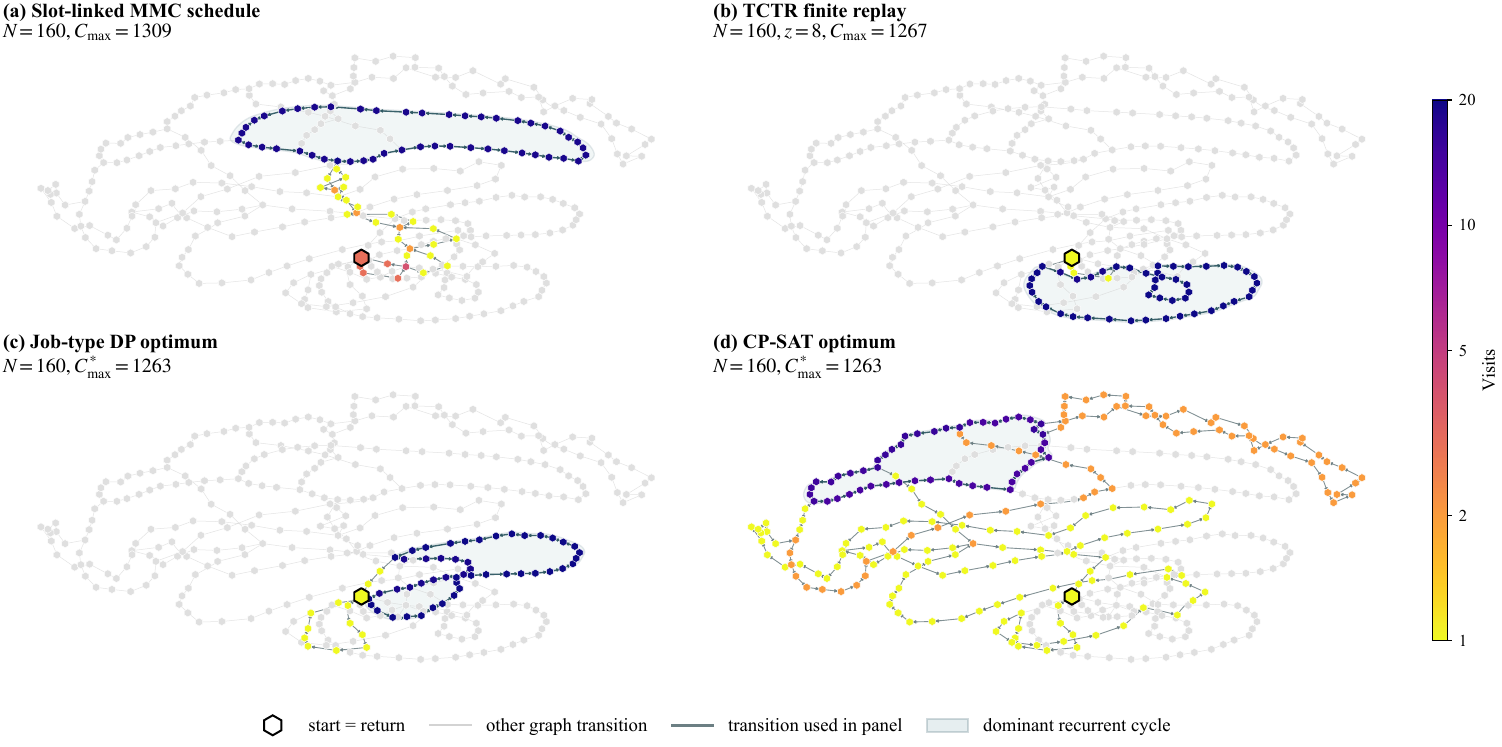}
    \setlength{\abovecaptionskip}{4pt}
    \caption{Projected-state supports for four schedules on the controlled
    instance. Panels share a 385-vertex, 409-transition union and one layout;
    positions have no state or time meaning. Color shows log-scaled arrivals,
    gray marks unused elements, shading identifies the dominant recurrent
    cycle, and the outlined hexagon marks the start/return state.}
    \label{fig:mmc_tctr_optimal_transition_graph}
\end{figure}

The comparison explains why finite replay remains useful after exact recurrent
attainment has been established. Exact cycle stitching is 46 time units above
the certified finite optimum, whereas TCTR is only four units above it. The
four schedules also follow markedly different projected-state trajectories.
Thus exact recurrent realization and near-optimal finite-horizon scheduling
need not coincide.

\subsection{Fixed-composition scaling toward the fluid regime}
\label{subsec:public_long_run_scaling}

The theory identifies $\rho_F(\mathbf b)$ as the attainable fluid workload
under fixed-composition growth. We therefore examine whether finite replay by job type shows
the same large-multiplicity tendency empirically.

The study uses 12 public base instances and two fixed-composition families per
base instance. The equal profile
uses $\mathbf b=\mathbf1,\mathbf r=\mathbf0$; the partial-growth profile grows
the longer half of the job types while keeping the shorter half at one copy.
The resulting 24 families are evaluated at 190 target sizes from $10^2$ to
$10^5$ operations. TCTR is compared with three dispatching rules and J-ALNS,
with J-ALNS receiving the larger of 10 seconds and the measured cold-start
TCTR runtime. All five methods return feasible schedules for all 4,560 cases.

\begin{figure}[H]
    \centering
    \includegraphics[width=\textwidth,trim=0bp 0bp 0bp 1bp,clip]{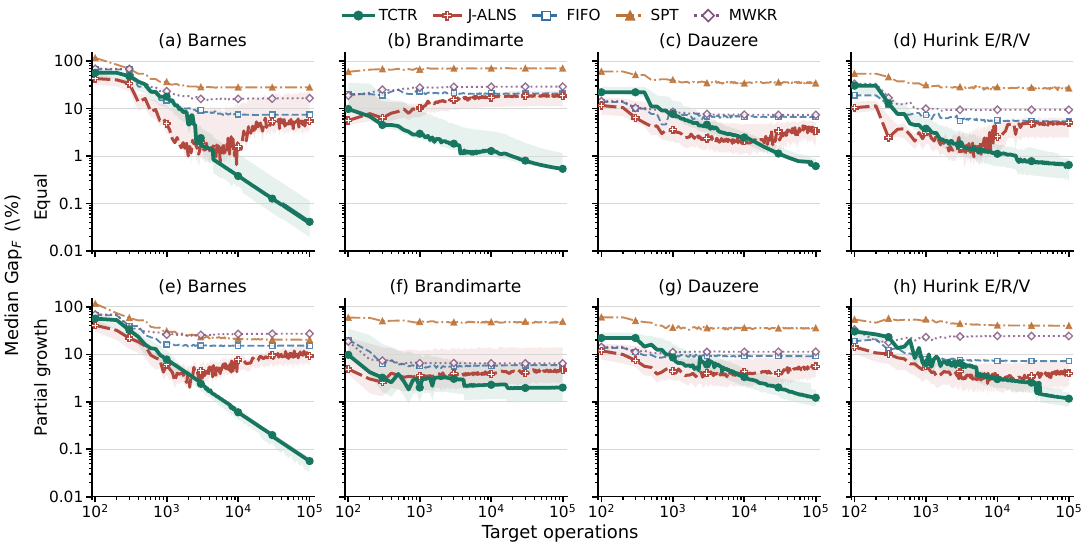}
    \setlength{\abovecaptionskip}{4pt}
    \caption{Family-resolved fluid-gap trends on the dense fixed-composition grid.
    Rows are profiles and columns are family groups; each panel summarizes
    three base instances, with interquartile bands for TCTR and J-ALNS.}
    \label{fig:public_dense_190_profile_trends}
\end{figure}

Figure~\ref{fig:public_dense_190_profile_trends} shows a systematic change with
multiplicity. At $10^3$ operations, J-ALNS has the lower median in six of eight
family-profile panels; by $10^4$, TCTR leads in seven. At $10^5$, TCTR
family medians range from 0.04\% to 0.65\% under equal growth and from 0.06\%
to 2.00\% under partial growth, while J-ALNS remains substantially farther
from the fluid bound. Hence job-indexed search remains competitive at moderate
sizes, but the value of repeated job-type structure becomes increasingly
visible as multiplicity grows. At $10^5$ operations, median runtimes are
378.75 seconds for TCTR and 379.51 seconds for J-ALNS.

Individual family trajectories and crossover statistics are reported in the
supplement.

\subsection{Broad finite-horizon benchmark}
\label{subsec:public_multiplicity_benchmark}

We next test whether the finite replay strategy remains effective beyond
selected fixed-composition families. The benchmark expands all 169 public base
instances under four deterministic multiplicity profiles, producing 676 cases with
9,581-10,356 operations and 450-2,016 jobs.

The seven methods are TCTR, the three dispatching baselines, J-ALNS, HGA, and
CP-SAT. TCTR template-allocation solves receive two seconds each; J-ALNS
receives 10 seconds per case. HGA uses a standard job-indexed FJSP encoding
\citep{zhang2011,rooyani2019} and reports the median of three deterministic
10-second runs. The independent CP-SAT model receives 300 seconds per case.
Neither HGA nor CP-SAT receives fluid or TCTR information.

\begin{figure}[H]
    \centering
    \includegraphics[width=\textwidth,trim=3bp 4bp 4bp 4bp,clip]{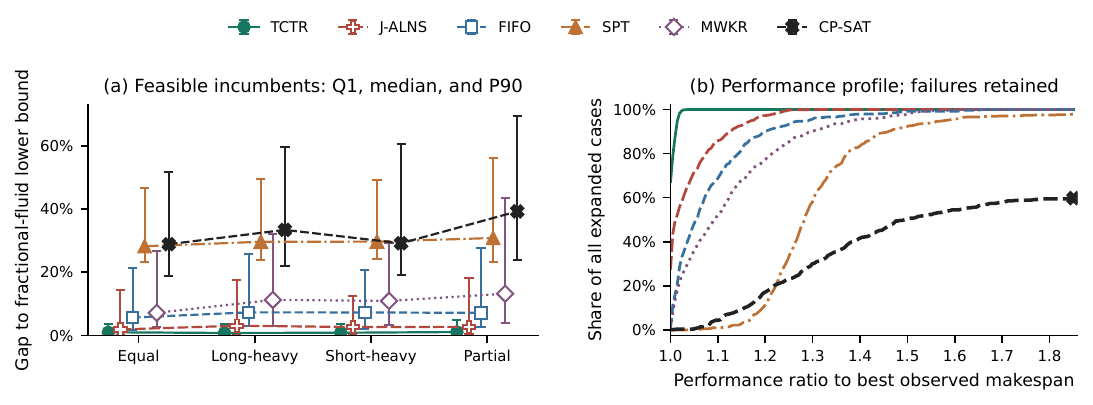}
    \setlength{\abovecaptionskip}{4pt}
    \caption{Benchmark on 676 expanded public cases. (a) Median fluid gaps
    with 25th-90th percentile whiskers among feasible incumbents. (b)
    Performance profiles relative to the best observed makespan. HGA is omitted
    from the panels for readability; in (b), it remains in the best-observed
    reference, and missing CP-SAT incumbents remain in the 676-case denominator.}
    \label{fig:public_multiplicity_benchmark}
\end{figure}

\begin{figure}[H]
    \centering
    \includegraphics[width=\textwidth,trim=4bp 5bp 4bp 3bp,clip]{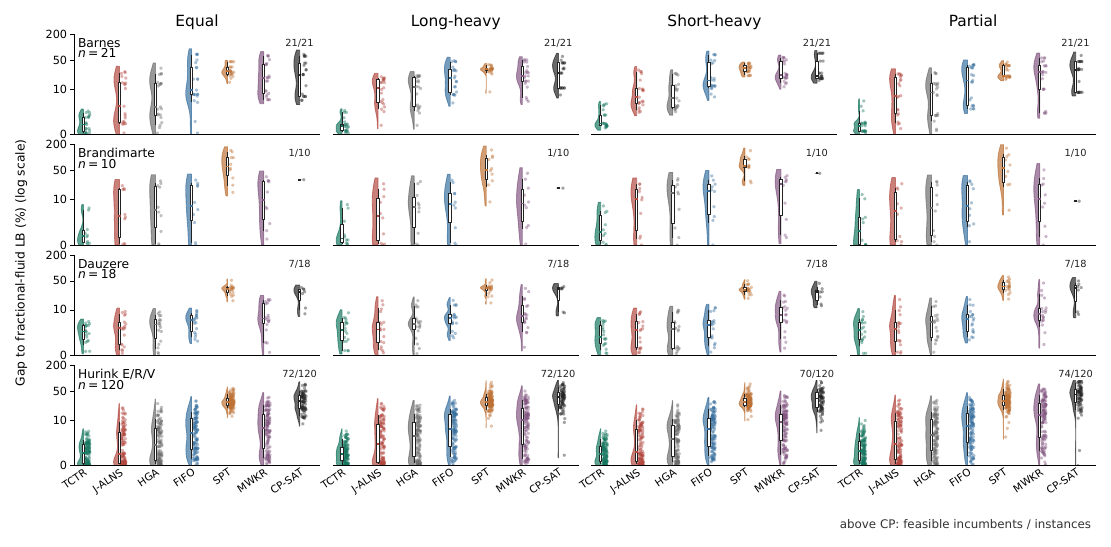}
    \setlength{\abovecaptionskip}{4pt}
    \caption{Fluid-gap distributions by family and multiplicity profile.
    Hurink E/R/V are pooled; half violins, boxes, and points show density,
    interquartile range with median, and individual cases on a log-spaced axis.
    CP-SAT distributions are conditional on a feasible incumbent.}
    \label{fig:public_multiplicity_raincloud}
\end{figure}

Figure~\ref{fig:public_multiplicity_benchmark} gives the main finite-horizon
result. TCTR, J-ALNS, and HGA are feasible on all 676 cases and achieve mean
fluid gaps of 1.54\%, 5.32\%, and 6.18\%, respectively. TCTR is within 2\%
of the best observed makespan across all seven methods on 98.5\% of the cases.
It records 456 wins, 6 ties, and 214 losses against J-ALNS, and 518 wins,
3 ties, and 155 losses against HGA.

CP-SAT returns an incumbent for 404 cases (59.8\%) and certifies one optimum;
its conditional mean fluid gap is 34.75\%. Full case-level results, paired
comparisons, solver statuses, and HGA replications are reported in the
supplement.

\begin{table}[H]
\centering
\caption{Family-profile results for the 676 expanded public cases.}
\label{tab:public_multiplicity_benchmark}
\begingroup
\footnotesize
\setlength{\tabcolsep}{1.0pt}
\renewcommand{\arraystretch}{0.94}
\begin{tabular*}{\textwidth}{@{\extracolsep{\fill}}llccrrrrrrrr@{}}
\toprule
Family & Profile & \(q\) range & \(N\) range & \(\overline{\mathrm{LB}}_F\) & \multicolumn{6}{c}{Mean \(C_{\max}\), complete methods} & CP-SAT inc. \\
\cmidrule(lr){6-11}\cmidrule(l){12-12}
 & & & & & TCTR & J-ALNS & HGA & FIFO & SPT & MWKR & mean \((f/a)\) \\
\midrule
Barnes & Equal & \([44,\,100]\) & \([660,\,1{,}005]\) & 51,559 & \textbf{52,035} & 56,522 & 56,683 & 64,158 & 67,381 & 65,020 & 67,139\,(21/21) \\
 & Long-heavy & \([19,\,45]\) & \([665,\,1{,}015]\) & 57,389 & \textbf{57,793} & 63,599 & 63,744 & 70,648 & 74,984 & 72,545 & 73,512\,(21/21) \\
 & Short-heavy & \([19,\,40]\) & \([665,\,1{,}015]\) & 48,891 & \textbf{49,639} & 54,029 & 53,595 & 62,820 & 64,797 & 63,958 & 66,718\,(21/21) \\
 & Partial & \([82,\,199]\) & \([663,\,1{,}000]\) & 62,709 & \textbf{63,227} & 69,377 & 68,236 & 75,039 & 80,356 & 79,431 & 80,904\,(21/21) \\
\addlinespace[2pt]
Brandimarte & Equal & \([42,\,182]\) & \([670,\,2{,}000]\) & 10,756 & \textbf{10,864} & 11,444 & 11,642 & 11,712 & 17,127 & 11,895 & 30,052\,(1/10) \\
 & Long-heavy & \([18,\,81]\) & \([660,\,1{,}980]\) & 12,387 & \textbf{12,508} & 12,866 & 13,084 & 13,201 & 18,843 & 13,393 & 31,977\,(1/10) \\
 & Short-heavy & \([18,\,74]\) & \([675,\,2{,}016]\) & 9,753 & \textbf{9,914} & 10,578 & 10,836 & 10,984 & 15,932 & 11,204 & 30,096\,(1/10) \\
 & Partial & \([76,\,344]\) & \([665,\,2{,}000]\) & 13,071 & \textbf{13,302} & 13,622 & 13,791 & 13,992 & 19,742 & 14,207 & 31,787\,(1/10) \\
\addlinespace[2pt]
Dauzere & Equal & \([26,\,51]\) & \([510,\,520]\) & 81,244 & \textbf{83,111} & 83,890 & 84,453 & 85,462 & 105,783 & 88,143 & 110,486\,(7/18) \\
 & Long-heavy & \([11,\,22]\) & \([455,\,495]\) & 83,870 & \textbf{86,243} & 87,200 & 88,195 & 89,379 & 109,998 & 92,209 & 115,772\,(7/18) \\
 & Short-heavy & \([11,\,22]\) & \([528,\,576]\) & 81,911 & \textbf{83,560} & 84,680 & 85,031 & 85,792 & 108,376 & 89,514 & 115,181\,(7/18) \\
 & Partial & \([46,\,89]\) & \([450,\,480]\) & 85,375 & \textbf{87,808} & 88,549 & 89,495 & 90,367 & 116,576 & 95,118 & 124,324\,(7/18) \\
\addlinespace[2pt]
Hurink E & Equal & \([33,\,200]\) & \([660,\,2{,}000]\) & 73,970 & \textbf{74,551} & 78,738 & 79,434 & 82,774 & 92,864 & 84,945 & 90,668\,(40/40) \\
 & Long-heavy & \([14,\,91]\) & \([665,\,2{,}002]\) & 82,371 & \textbf{83,119} & 89,891 & 89,735 & 93,251 & 104,310 & 97,996 & 103,202\,(40/40) \\
 & Short-heavy & \([14,\,80]\) & \([665,\,2{,}016]\) & 69,655 & \textbf{70,278} & 74,226 & 74,729 & 77,843 & 88,509 & 80,434 & 87,115\,(40/40) \\
 & Partial & \([66,\,399]\) & \([663,\,2{,}000]\) & 90,744 & \textbf{91,694} & 99,458 & 99,253 & 105,049 & 114,726 & 110,640 & 115,298\,(40/40) \\
\addlinespace[2pt]
Hurink R & Equal & \([33,\,200]\) & \([660,\,2{,}000]\) & 69,108 & \textbf{69,924} & 70,250 & 71,115 & 72,998 & 91,456 & 74,829 & 83,807\,(32/40) \\
 & Long-heavy & \([14,\,91]\) & \([665,\,2{,}002]\) & 76,091 & \textbf{77,035} & 78,568 & 79,957 & 82,222 & 100,866 & 85,467 & 97,055\,(32/40) \\
 & Short-heavy & \([14,\,80]\) & \([665,\,2{,}016]\) & 62,522 & \textbf{63,375} & 64,007 & 65,619 & 67,761 & 84,336 & 69,530 & 77,267\,(30/40) \\
 & Partial & \([66,\,399]\) & \([663,\,2{,}000]\) & 80,810 & \textbf{82,710} & 84,632 & 86,330 & 88,053 & 110,498 & 95,909 & 113,274\,(34/40) \\
\addlinespace[2pt]
Hurink V & Equal & \([33,\,200]\) & \([660,\,2{,}000]\) & 69,108 & 69,564 & \textbf{69,245} & 69,415 & 70,119 & 87,206 & 70,231 & -\,(0/40) \\
 & Long-heavy & \([14,\,91]\) & \([665,\,2{,}002]\) & 76,091 & 76,529 & \textbf{76,345} & 76,824 & 78,037 & 96,560 & 78,402 & -\,(0/40) \\
 & Short-heavy & \([14,\,80]\) & \([665,\,2{,}016]\) & 62,522 & 62,875 & \textbf{62,788} & 63,028 & 63,992 & 78,569 & 64,340 & -\,(0/40) \\
 & Partial & \([66,\,399]\) & \([663,\,2{,}000]\) & 80,810 & 81,445 & \textbf{81,292} & 82,047 & 83,008 & 105,391 & 84,804 & -\,(0/40) \\
\midrule
All families & All profiles & \([11,\,399]\) & \([450,\,2{,}016]\) & 69,270 & \textbf{70,118} & 72,447 & 72,973 & 75,661 & 90,088 & 78,222 & 92,194\,(404/676) \\
\addlinespace[2pt]
\multicolumn{5}{l}{Overall mean gap to \(\mathrm{LB}_F\) (\%)} & \textbf{1.54} & 5.32 & 6.18 & 10.05 & 33.59 & 14.12 & 34.75 \\
\multicolumn{5}{l}{Best/tied (\%)} & \textbf{67.3} & 27.4 & 8.3 & 0.9 & 0.0 & 0.4 & 0.1 \\
\multicolumn{5}{l}{Feasible (\%)} & \textbf{100.0} & \textbf{100.0} & \textbf{100.0} & \textbf{100.0} & \textbf{100.0} & \textbf{100.0} & 59.8 \\
\multicolumn{5}{l}{Median time (s)} & 7.009 & 10.036 & 10.035 & \textbf{0.025} & 0.126 & 0.038 & 300.606 \\
\bottomrule
\end{tabular*}

\vspace{2pt}

\parbox{\textwidth}{\scriptsize\emph{Notes.} The \(q\) and \(N\) columns
report ranges; \(\overline{\mathrm{LB}}_F\) and \(C_{\max}\) are rounded group
means. Bold marks the smallest group-mean makespan among complete-coverage
methods. HGA entries are medians of three independent 10-second runs. CP-SAT
incumbent and gap means are conditional on feasibility; \(f/a\) denotes
feasible/attempted cases, whereas rates use all 676 cases.}
\endgroup
\end{table}

Table~\ref{tab:public_multiplicity_benchmark} shows that the aggregate result is
not driven by a single family: TCTR has the smallest group-mean makespan among
complete-coverage methods in 20 of the 24 family-profile groups, with J-ALNS
leading in the four Hurink V groups. On Hurink V, the final ECT replay is
particularly important, reducing TCTR's mean fluid gap from 4.97\% before
refinement to 0.82\%.

Figure~\ref{fig:public_multiplicity_raincloud} further confirms that the
overall advantage persists across families and multiplicity profiles rather
than being generated by a small subset of cases.

\subsection{Contribution of TCTR components}

Finally, we evaluate five nested stages on the same 676 cases. Core replay uses
fixed-grid batches, one stream, append-only placement, and template labels.
Successive stages add earliest-gap insertion (EGI), the full set of candidate streams,
adaptive batches, and ECT refinement. Because each stage retains the preceding
alternatives, the comparisons measure conditional marginal gains along the
implemented TCTR pipeline.

\begin{table}[H]
\centering
\caption{Nested TCTR build-up on 676 expanded cases. Family entries are mean
fluid gaps (Hurink pools E/R/V). P90 pools all cases; ``Improved'' and median
reduction compare consecutive stages, with reductions conditional on
improvement. A down arrow marks a relative gap decrease of at least 25\%.}
\label{tab:tctr_component_build_up}
\scriptsize
\setlength{\tabcolsep}{2.2pt}
\begin{tabular*}{\textwidth}{@{\extracolsep{\fill}}lccccrrrrrrr}
\toprule
& \multicolumn{4}{c}{Components}
& \multicolumn{4}{c}{Mean fluid gap by family}
& \multicolumn{3}{c}{All cases} \\
\cmidrule(lr){2-5}\cmidrule(lr){6-9}\cmidrule(lr){10-12}
Stage & EGI & Streams & Adapt. & ECT
& \multicolumn{1}{c}{Barnes} & \multicolumn{1}{c}{Brand.}
& \multicolumn{1}{c}{Dauzere} & \multicolumn{1}{c}{Hurink}
& \multicolumn{1}{c}{P90} & \multicolumn{1}{c}{Improved}
& \multicolumn{1}{c}{Med. reduction} \\
\midrule
Core replay & - & - & - & -
& $30.42\%$ & $22.14\%$ & $36.44\%$ & $29.00\%$
& $58.88\%$ & - & - \\
$+$ gap insertion & \checkmark & - & - & -
& $2.63\%$\rlap{\hspace{1.5pt}$\downarrow$}
& $3.36\%$\rlap{\hspace{1.5pt}$\downarrow$}
& $7.47\%$\rlap{\hspace{1.5pt}$\downarrow$}
& $5.72\%$\rlap{\hspace{1.5pt}$\downarrow$}
& $12.87\%$\rlap{\hspace{1.5pt}$\downarrow$}
& $663$ ($98.1\%$) & $18.75\%$ \\
$+$ streams & \checkmark & \checkmark & - & -
& $1.21\%$\rlap{\hspace{1.5pt}$\downarrow$}
& $2.47\%$\rlap{\hspace{1.5pt}$\downarrow$}
& $5.20\%$\rlap{\hspace{1.5pt}$\downarrow$}
& $3.94\%$\rlap{\hspace{1.5pt}$\downarrow$}
& $9.61\%$\rlap{\hspace{1.5pt}$\downarrow$}
& $566$ ($83.7\%$) & $1.25\%$ \\
$+$ adaptive & \checkmark & \checkmark & \checkmark & -
& $1.16\%$ & $2.37\%$ & $5.01\%$ & $3.73\%$
& $9.25\%$ & $148$ ($21.9\%$) & $0.29\%$ \\
TCTR & \checkmark & \checkmark & \checkmark & \checkmark
& $1.04\%$ & $2.19\%$
& $2.86\%$\rlap{\hspace{1.5pt}$\downarrow$}
& $1.37\%$\rlap{\hspace{1.5pt}$\downarrow$}
& $3.97\%$\rlap{\hspace{1.5pt}$\downarrow$}
& $441$ ($65.2\%$) & $1.70\%$ \\
\bottomrule
\end{tabular*}
\end{table}

Table~\ref{tab:tctr_component_build_up} shows that the largest marginal gain
comes from earliest-gap insertion, followed by stream diversification.
Adaptive batches act selectively, while the final ECT replay still improves
441 of 676 cases. The results reinforce the role assigned to TCTR in the
paper: the recurrent theory supplies a compact repeated job-type structure,
whereas finite replay, diversification, and machine-assignment adaptation determine how
effectively that structure is used on a particular finite demand.

\FloatBarrier

\section{Conclusion}
\label{sec:conclusion}

This paper establishes a direct connection between two complementary views of
high-multiplicity flexible job shop scheduling: workload allocation through a
fluid relaxation and repeated execution through a recurrent discrete schedule.
The main theoretical result shows that, after a suitable finite scaling, an
optimal fluid allocation can be realized exactly by a feasible recurrent
discrete schedule with the same workload value. Period-shifted slot linking
provides the constructive bridge by preserving the machine schedule while
grouping operation occurrences across repetitions into jobs that satisfy their
required operation order. This recurrent attainment also has a finite-instance
consequence: under fixed-composition growth, the optimal makespan remains
within a constant additive gap of the fluid lower bound, so the relative gap
vanishes as the number of job copies grows.

The recurrent result also changes how finite high-multiplicity instances can
be approached computationally. Rather than treating every job copy as an
independent scheduling object, it reveals that much of the relevant structure
can be represented through compact job-type patterns and realized only when a
finite schedule is constructed. TCTR carries this idea into finite-horizon
scheduling without requiring exact recurrence, allowing the repeated pattern
and machine assignments to adapt to the actual finite instance. Computational
results support this translation from theory to scheduling practice: across
676 multiplicity-expanded public FJSP instances, TCTR produces a feasible
schedule in every case with a 1.54\% mean gap to the fluid lower bound.

The analysis therefore suggests a broader role for fluid relaxations in
high-multiplicity scheduling. Beyond providing lower bounds or guidance for
subsequent search, an optimal fluid allocation can reveal discrete recurrent
structure that is useful both for theoretical performance guarantees and for
finite scheduling decisions. Extending this connection to settings with
setup times, transport times, blocking, uncertainty, or other shop constraints
is a natural direction for future research.

\bibliographystyle{elsarticle-harv}
\bibliography{refs}

\end{document}


\begin{frontmatter}
\title{Online Supplement to ``High-Multiplicity Flexible Job Shops: From Exact
Recurrent Fluid Attainment to Structure-Guided Finite-Horizon Scheduling''}
\author[sds]{Wenjun Zheng}
\ead{wenjunzheng@link.cuhk.edu.cn}
\author[sds]{Wei Qu}
\ead{weiqu@link.cuhk.edu.cn}
\author[sse]{Weilin Cai}
\ead{weilincai@link.cuhk.edu.cn}
\author[sds,sribd]{Jianfeng Mao\corref{cor1}}
\ead{jfmao@cuhk.edu.cn}
\cortext[cor1]{Corresponding author}

\address[sds]{School of Data Science, The Chinese University of Hong Kong,
Shenzhen (CUHK-Shenzhen), Guangdong 518172, China}
\address[sse]{School of Science and Engineering, The Chinese University of
Hong Kong, Shenzhen (CUHK-Shenzhen), Guangdong 518172, China}
\address[sribd]{Shenzhen Research Institute of Big Data, Guangdong 518172,
China}
\end{frontmatter}

\section{Worked single-job-type specialization}

This supplement develops the single-job-type case as a self-contained example.
It separates the three ideas that are combined across job types in the main paper:
externalizing unstarted supply, linking packed slots through period offsets,
and translating unfinished generations between consecutive boundaries. The
general fixed-composition theorem in the main paper is proved independently and does
not rely on this supplement.

\begin{example}[One repeated job type]
\label{ex:single_route_supp}
Fix one job type whose operation sequence has $H$ positions, and consider $N$
interchangeable copies of this type. Let
$X=(\mathbf n,\mathbf m,\mathcal W)$ be the completion-event state: $n_h$
counts idle jobs whose next operation is at position $h$, $\mathbf m$ records
idle machines, and $\mathcal W$ is the multiset of active operations.
\end{example}

\subsection{Projected recurrence and finite connectors}

Only $n_1$ records jobs that have not begun. Delete that coordinate and admit a
fresh job whenever a position-$1$ operation is assigned:
\begin{equation}
    \operatorname{proj}(X)
    :=\bigl((n_2,\ldots,n_H)^\top,\mathbf m,\mathcal W\bigr).
    \label{eq:single_projection_supp}
\end{equation}
All in-process jobs and the complete machine state remain visible.

The weighted projected graph
$\mathcal G_\infty=(V_\infty,E_\infty,w)$ has three components. Its ordinary
vertices are projected states reachable from
$v_0=(\mathbf0,\bar{\mathbf m},\emptyset)$ under unlimited external position-$1$
supply. Every admissible assignment gives a zero-weight edge; a position-$1$
assignment is also an admission edge. If one temporal-advancement action of
duration $\Delta$ completes $d$ descriptors, serialize those completions in a
fixed canonical order through a forced chain of $d$ edges. The first $d-1$
edges have weight zero and the last has weight $\Delta$.

The graph is generally infinite because unfinished-job inventories are not
truncated. A finite walk nevertheless lifts to a feasible execution whenever a
distinct job is supplied at each admission edge. Thus projection can represent
positive net production by a state-returning closed walk without removing a
scheduling constraint. Figure~\ref{fig:single_projection_supp} gives the
smallest example.

\begin{figure}[htbp]
    \centering
    \includegraphics[width=\textwidth,trim=4bp 5bp 2bp 4bp,clip]{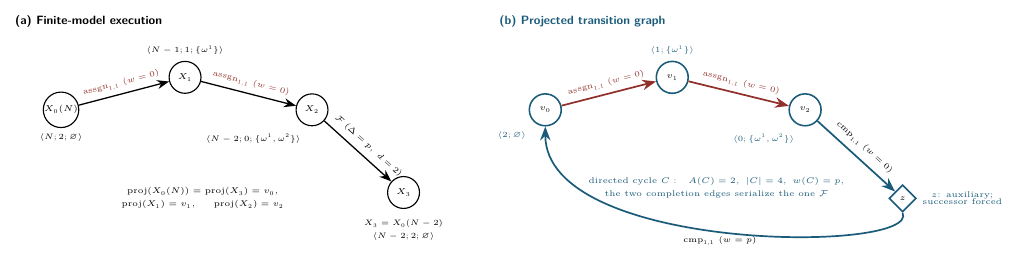}
    \setlength{\abovecaptionskip}{4pt}
    \caption{External-supply projection for a one-operation job type and two
    identical machines. Two assignments and one temporal-advancement step
    transform $X_0(N)$ into $X_0(N-2)$, but their projected endpoints coincide.
    Serializing the simultaneous completions produces a finite closed walk.}
    \label{fig:single_projection_supp}
\end{figure}

For operation position $h$, let $p_h(M)$ be its processing time on eligible
machine $M\in\mathcal E_h$. The fluid workload per completed job is
\begin{equation}
\begin{aligned}
    \rho_F:=\min_{\rho,\lambda}\quad &\rho\\
    \text{subject to}\quad
    &\sum_{M\in\mathcal E_h}\lambda_{h,M}=1,
    &&h=1,\ldots,H,\\
    &\sum_{h:M\in\mathcal E_h}p_h(M)\lambda_{h,M}\leq\rho,
    &&M\in\mathcal M,\\
    &\lambda_{h,M}\geq0.
\end{aligned}
\label{eq:single_fluid_lp_supp}
\end{equation}

\begin{lemma}[Single-job-type connectors]
\label{lem:single_connectors_supp}
Every ordinary vertex $v$ has a finite filling path from $v_0$ to $v$ and a
finite no-admission draining path from $v$ to $v_0$. If $v$ is all-idle with
$\beta_h$ jobs waiting at position $h=2,\ldots,H$, these paths can be chosen
serially so that
\begin{equation}
    T_v^{\mathrm{in}}+T_v^{\mathrm{out}}
    =\beta P,
    \qquad
    \beta:=\sum_{h=2}^H\beta_h,
    \qquad
    P:=\sum_{h=1}^H\min_{M\in\mathcal E_h}p_h(M).
    \label{eq:single_connector_identity_supp}
\end{equation}
\end{lemma}

\begin{proof}
The filling path exists by reachability. To drain, complete every active
operation and then process the finitely many waiting jobs through their
remaining positions without further admissions. For an all-idle state, create
each unfinished job by processing its completed operation-sequence prefix serially and drain
it by processing its remaining suffix serially, always using a fastest eligible
machine. The two pieces contain every operation of that job exactly once.
\end{proof}

For later use, let $C$ denote any nonempty finite closed walk, with admission
count $A(C)$, elapsed weight $w(C)$, and edge count $|C|$.
Every feasible $N$-job schedule satisfies $C_{\max}\geq N\rho_F$: its
machine-assignment counts divided by $N$ are feasible in
\eqref{eq:single_fluid_lp_supp} with $\rho=C_{\max}/N$. The connectors will
later place any recurrent core between finite entry and exit pieces. Their
amortization is previewed in Figure~\ref{fig:single_amortization_supp}.

\begin{figure}[htbp]
    \centering
    \includegraphics[width=0.98\textwidth,trim=2bp 5bp 2bp 2bp,clip]{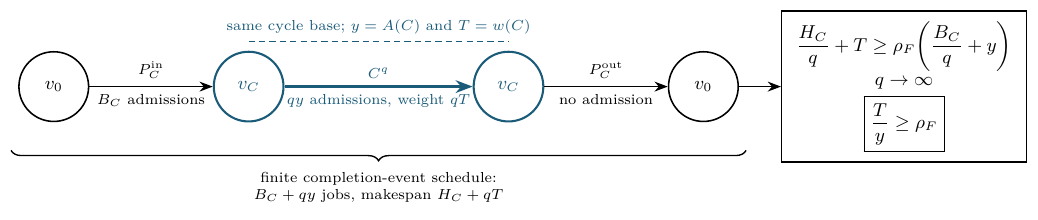}
    \setlength{\abovecaptionskip}{4pt}
    \caption{Connector amortization. Fixed paths turn $C^q$ into a finite
    schedule; after division by $q$, their contributions vanish and leave
    $w(C)/A(C)\geq\rho_F$.}
    \label{fig:single_amortization_supp}
\end{figure}

\subsection{Period-shifted slot linking}

\begin{lemma}[Single-job-type period-shifted slot linking]
\label{lem:single_slot_linking_supp}
Let $\lambda$ be any rational fluid allocation satisfying the position balances in
\eqref{eq:single_fluid_lp_supp}, and define
\[
    \rho(\lambda):=
    \max_{M\in\mathcal M}
    \sum_{h:M\in\mathcal E_h}p_h(M)\lambda_{h,M}.
\]
There exist $y\in\mathbb N_+$, $T=y\rho(\lambda)$, and a feasible
$T$-periodic completion-event schedule that admits $y$ jobs per period and
returns at every period boundary to the same finite all-idle projected state.
One period induces a finite closed walk $C_\lambda$ with
\begin{equation}
    A(C_\lambda)=y,
    \qquad w(C_\lambda)=T,
    \qquad |C_\lambda|=2Hy.
    \label{eq:single_linking_statistics_supp}
\end{equation}
\end{lemma}

\begin{proof}
\emph{Integerize and pack.}
Choose $y$ so that
$a_{h,M}:=y\lambda_{h,M}$ and $T:=y\rho(\lambda)$ are integral. On each
machine, pack its $a_{h,M}$ unlabelled position-$h$ occurrences consecutively
in $[0,T)$. The resulting ordered intervals form a period-$T$ slot pattern,
repeated every $T$ time units. There are exactly $y$ slots at every operation
position, and a slot on at least one maximum-load machine ends at $T$.

\emph{Link by period offsets.}
Label the slots at each position by lanes $k=1,\ldots,y$. Let $M_{h,k}$ and
$\varphi_{h,k}$ be the machine and phase of slot $(h,k)$. Set $z_{1,k}=0$ and
recursively choose
\begin{equation}
z_{h+1,k}:=\min\left\{
u\in\mathbb Z_{\geq z_{h,k}}:
\varphi_{h+1,k}+uT
\geq\varphi_{h,k}+z_{h,k}T+p_h(M_{h,k})
\right\}.
\label{eq:single_offset_recursion_supp}
\end{equation}
Every slot ends by $T$, so
$z_{h+1,k}-z_{h,k}\in\{0,1\}$ and $0\leq z_{h,k}\leq h-1$.
Job $J_{k,s}$ uses period copy $s+z_{h,k}$ of slot $(h,k)$ and starts at
\[
    s_{h,k,s}:=\varphi_{h,k}+(s+z_{h,k})T.
\]
For fixed $(h,k)$, the map $s\mapsto s+z_{h,k}$ is a bijection on
$\mathbb Z$, so every machine slot is used exactly once. The offset recursion
enforces job precedence without moving a slot.

\begin{figure}[htbp]
    \centering
    \includegraphics[width=\textwidth,trim=4bp 3bp 2bp 4bp,clip]{figure/theorem1_periodic_mmc_construction_v13.pdf}
    \setlength{\abovecaptionskip}{4pt}
    \caption{Single-job-type lane construction for $H=3$, $y=2$, and $T=4$.
    Integer counts form fixed per-machine slot patterns; period-copy
    relabelling enforces
    precedence; translating unfinished generations gives identical consecutive
    boundary states.}
    \label{fig:single_linking_supp}
\end{figure}

\emph{Translate unfinished generations.}
Let $c_{h,k,s}$ be the completion time of position $h$ of $J_{k,s}$. At
boundary $nT$, after boundary completions and before phase-zero assignments,
define
\[
    \eta_{k,s}(n):=\min\bigl(\{h:c_{h,k,s}>nT\}\cup\{H+1\}\bigr)
\]
for admitted generations $s\leq n-1$, and
\[
    \beta_h(n):=\sum_{k=1}^y\sum_{s\leq n-1}
    \mathbf{1}\{\eta_{k,s}(n)=h\},
    \qquad h=2,\ldots,H.
\]
Only finitely many generations contribute because $z_{h,k}\leq h-1$.
Moreover, $c_{h,k,s+1}=c_{h,k,s}+T$, so
$\eta_{k,s+1}(n+1)=\eta_{k,s}(n)$ and every $\beta_h(n)$ repeats. All machines are
idle and no operation is active at either boundary. The projected state is
therefore finite and identical. Lemma~\ref{lem:single_connectors_supp} makes it
reachable and drainable, so one period is a finite closed walk. Its edge count
is $2Hy$ because the period contains $y$ assignments and $y$ completions at
each of the $H$ positions.
\end{proof}

\subsection{Exact recurrent fluid attainment}

\begin{theorem}[Single-job-type exact recurrent fluid attainment]
\label{thm:single_attainment_supp}
For one repeated job type, there exist $y\in\mathbb N_+$ and a feasible
$y\rho_F$-periodic completion-event schedule that admits $y$ jobs per period
and returns to the same finite all-idle projected state. One period induces a
finite closed walk $\mathcal C_F$ satisfying
\[
    A(\mathcal C_F)=y,
    \qquad |\mathcal C_F|=2Hy,
    \qquad w(\mathcal C_F)=y\rho_F.
\]
\end{theorem}

\begin{proof}
A rational optimum $\lambda^*$ exists in
\eqref{eq:single_fluid_lp_supp}. Lemma~\ref{lem:single_slot_linking_supp}
constructs the stated periodic schedule and finite closed walk. The edge count
follows because state return balances assignments and completions separately
at every operation position, giving $y$ of each per period.
\end{proof}

\begin{remark}
The bi-infinite generation index describes a steady periodic realization, not
an infinite finite-instance inventory. At any boundary only finitely many jobs
are unfinished; negative generations represent precisely that finite initial
unfinished-job inventory. The filling-period-draining construction in the main paper
provides explicit entry and exit paths.
\end{remark}

\subsection{Admission-normalized closed-walk consequence}

Let $\mathfrak C$ be the nonempty closed walks of $\mathcal G_\infty$. For
$C\in\mathfrak C$, let $A(C)$ be its number of admissions, $|C|$ its edge
count, and $w(C)$ its total weight. The following accounting identity connects
the admission-normalized and edge-normalized values.

\begin{lemma}[Single-job-type cycle accounting]
\label{lem:single_cycle_accounting_supp}
Every $C\in\mathfrak C$ has $A(C)>0$ and contains exactly $A(C)$ assignments
and completions at every operation position. Consequently,
\begin{equation}
    |C|=2HA(C),
    \qquad
    \frac{w(C)}{|C|}
    =\frac{1}{2H}\frac{w(C)}{A(C)}.
    \label{eq:single_cycle_accounting_supp}
\end{equation}
\end{lemma}

\begin{proof}
Rotate the walk to an ordinary vertex. Let $I_h$ and $F_h$ count its
position-$h$ assignments and completions. Recurrence at position $1$ gives
$F_1=I_1=A(C)$. For $h\geq2$, recurrence of the idle-plus-active population
gives $F_h=F_{h-1}$, while recurrence of the active population gives
$I_h=F_h$. Hence every count equals $A(C)$. If $A(C)=0$, no assignment or
completion edge remains, contradicting nonemptiness. Summing the counts gives
the edge identity and the relation between the two normalizations.
\end{proof}

The finite-schedule fluid lower bound also applies to every closed walk after
connector amortization. Place $C^q$ between its fixed filling and draining
paths. If the connector admissions and total weight are $B_C$ and $K_C$, the
lifted schedule has $B_C+qA(C)$ jobs and makespan $K_C+qw(C)$. Dividing the
fluid lower bound by $q$ and taking $q\to\infty$ gives
\begin{equation}
    \frac{w(C)}{A(C)}\geq\rho_F.
    \label{eq:single_cycle_lower_bound_supp}
\end{equation}
Theorem~\ref{thm:single_attainment_supp} supplies a closed walk attaining
equality. Therefore
\[
    \inf_{C\in\mathfrak C}\frac{w(C)}{A(C)}=\rho_F,
    \qquad
    \inf_{C\in\mathfrak C}\frac{w(C)}{|C|}=\frac{\rho_F}{2H},
\]
and both infima are attained by $\mathcal C_F$.

\section{TCTR implementation details}

The main paper defines the finite-horizon TCTR architecture: candidate batches,
circular priority streams, screening on template-specified machines, and one
ECT machine-selection refinement. This section specifies the deterministic
candidate-generation and selection rules used in the experiments, including
candidate batches, stream orders, rotations, tie breaking, and the
supplementary reuse protocol. As in Section~5 of the main paper, circular
stream wrap-around is a priority mechanism and does not impose projected-state
return.

\subsection{Candidate batches}

For target multiplicities $\boldsymbol\nu$, let
$I_+:=\{i:\nu_i>0\}$ and $m_+:=|I_+|$. Write
$v_{(1)}\leq\cdots\leq v_{(m_+)}$ for the positive components of
$\boldsymbol\nu$ and set $v_{\max}:=v_{(m_+)}$. The operator
$\operatorname{nint}(x)$ denotes nearest-integer rounding, with half-integer
ties sent to the even integer.

A trial value $\hat q$ represents a prospective number of stream repetitions.
For component cap $\zeta$, define the candidate batch
\begin{equation}
    z_i(\hat q;\zeta):=
    \begin{cases}
        0, & \nu_i=0,\\
        \max\{1,\min\{\zeta,
        \operatorname{nint}(\nu_i/\hat q)\}\}, & \nu_i>0.
    \end{cases}
    \label{eq:tctr_compressed_batch_supp}
\end{equation}
By construction, $z_i>0$ exactly when $\nu_i>0$.

The fixed-grid layer samples multiplicity extremes, quartiles, and dyadic scales.
For $f\in\{1/4,1/2,3/4\}$, let
$Q_f(\boldsymbol\nu):=
v_{\left(1+\operatorname{nint}((m_+-1)f)\right)}$, and define
\begin{equation}
\begin{aligned}
\mathcal Q_{\mathrm{FG}}(\boldsymbol\nu)
:=\operatorname{sortuniq}\Bigl(&
\{1,v_{(1)},v_{\max},Q_{1/4},Q_{1/2},Q_{3/4},\\
&\max(1,\lfloor v_{\max}/2\rfloor),
 \max(1,\lfloor v_{\max}/4\rfloor),
 \max(1,\lfloor v_{\max}/8\rfloor)\}\\
&\cup\{v_{(j)}:v_{(j)}\geq
              \max(1,\lfloor v_{\max}/4\rfloor)\}\Bigr).
\end{aligned}
\label{eq:tctr_fixed_grid_supp}
\end{equation}

The adaptive layer permits larger candidate batches as the target grows. Its
component budget is
\begin{equation}
\bar z_A(\boldsymbol\nu)
:=\min\left\{z_{\max},
    \max\left\{z_{\min},
      \left\lceil\log_2(1+\|\boldsymbol\nu\|_1)+\kappa_q\right\rceil
    \right\}\right\}.
\label{eq:tctr_adaptive_budget_supp}
\end{equation}
For $d=1,\ldots,\bar z_A(\boldsymbol\nu)$, define
\begin{equation}
\begin{aligned}
\mathcal Q_d(\boldsymbol\nu):=\Bigl\{
&\max(1,\lfloor v_{\max}/d\rfloor),
 \max(1,\operatorname{nint}(v_{\max}/d)),\\
&\max(1,\lceil v_{\max}/d\rceil)\Bigr\},
\qquad
\mathcal Q_A(\boldsymbol\nu)
:=\operatorname{sortuniq}
  \bigcup_{d=1}^{\bar z_A(\boldsymbol\nu)}
  \mathcal Q_d(\boldsymbol\nu).
\end{aligned}
\label{eq:tctr_adaptive_scales_supp}
\end{equation}

Let $\bar z_{\mathrm{FG}}$ and $\overline Z_{\mathrm{FG}}$ be the component and
total-size caps of the fixed-grid layer, and set
$\overline Z_A(\boldsymbol\nu):=
\max\{m_+,\kappa_Z\bar z_A(\boldsymbol\nu)\}$. The candidate sets are
\begin{equation}
\begin{aligned}
\mathcal Z_{\mathrm{FG}}(\boldsymbol\nu)
&:=\left\{
\mathbf z(\hat q;\bar z_{\mathrm{FG}}):
\hat q\in\mathcal Q_{\mathrm{FG}}(\boldsymbol\nu),\quad
\|\mathbf z\|_1\leq\overline Z_{\mathrm{FG}}
\right\},\\
\mathcal Z_A(\boldsymbol\nu)
&:=\left\{
\mathbf z(\hat q;\bar z_A(\boldsymbol\nu)):
\hat q\in\mathcal Q_A(\boldsymbol\nu),\quad
\|\mathbf z\|_1\leq\overline Z_A(\boldsymbol\nu)
\right\},\\
\mathcal Z(\boldsymbol\nu)
&:=\operatorname{sortuniq}
\left(\mathcal Z_{\mathrm{FG}}(\boldsymbol\nu)
      \cup\mathcal Z_A(\boldsymbol\nu)\right).
\end{aligned}
\label{eq:tctr_candidate_batches_supp}
\end{equation}
Fixed-grid batches precede adaptive batches before deduplication. If the union
is empty, TCTR uses $z_i=\mathbb I(\nu_i>0)$. The fixed-grid-only stage in the component build-up
restricts \eqref{eq:tctr_candidate_batches_supp} to
$\mathcal Z_{\mathrm{FG}}(\boldsymbol\nu)$; it is an intermediate stage rather
than a separate method.

The experiments use $\bar z_{\mathrm{FG}}=12$,
$\overline Z_{\mathrm{FG}}=50$, $z_{\min}=4$, $z_{\max}=24$,
$\kappa_q=2$, and $\kappa_Z=4$. Before deduplication, the fixed-grid and adaptive
layers contribute at most $m_++9$ and $3\bar z_A(\boldsymbol\nu)$ trial values,
respectively.

\subsection{Candidate orderings and deterministic selection}

For batch $\mathbf z$, let
$K_{\mathrm{tpl}}(\mathbf z):=\sum_{i=1}^Rz_iH_i$ be the number of packed
occurrences. Each occurrence records its type $i$, operation position $h$,
template machine label $M$, copy index $k$, packed phases $(\varphi,\psi)$,
and within-machine index $d$. The resulting machine-labeled multiset is
linearized by the following lexicographic keys:
\begin{align}
    \text{diagonal:}\quad
      &(k+h,\ h,\ \varphi,\ M,\ i),\label{eq:tctr_order_diagonal_supp}\\
    \text{template-start:}\quad
      &(\varphi,\ \psi,\ M,\ d,\ i,\ h,\ k),\label{eq:tctr_order_start_supp}\\
    \text{copy-stage:}\quad
      &(k,\ h,\ \varphi,\ M,\ i).
      \label{eq:tctr_order_copy_supp}
\end{align}
The diagonal key follows a copy-position wavefront, the template-start key
follows packed machine phases, and the copy-stage key aligns comparable copy
positions. All three retain the same machine-labeled multiset.

For rotation budget $R_{\mathrm{rot}}$, the starting indices are
\begin{equation}
    \mathcal D_{\mathrm{rot}}
    :=\operatorname{sortuniq}
      \left\{
      \operatorname{nint}\!\left(
      \frac{jK_{\mathrm{tpl}}}{R_{\mathrm{rot}}}\right)
      \bmod K_{\mathrm{tpl}}
      :j=0,\ldots,R_{\mathrm{rot}}-1
      \right\}.
    \label{eq:tctr_rotations_supp}
\end{equation}
The experiments use $R_{\mathrm{rot}}=8$. The three orderings and these
rotations form the finite set of candidate circular streams for each batch and
its integer machine assignments.

Screening on the template-specified machines ranks candidates
lexicographically by makespan, $\|\mathbf z\|_1$, trial value $\hat q$,
candidate index, ordering index, and rotation index. Candidate indexing places
fixed-grid batches before adaptive batches. After the best screened stream is
selected, ECT machine selection chooses the machine for the next operation of
job copy $j=J_{i,k}$ by
\begin{equation}
    M_{j,h}^{\mathrm{ECT}}
:=\underset{M\in\mathcal E_{i,h}}{\operatorname{arg\,lex\,min}}
    \bigl(s_{j,h,M}+p_{i,h}(M),\ s_{j,h,M},\ p_{i,h}(M),\ M\bigr).
\label{eq:tctr_ect_machine_selection_supp}
\end{equation}
Thus ECT first minimizes completion time, followed by start time, processing
time, and machine index. The ECT-refined schedule replaces the screened
schedule only when its makespan is strictly smaller; a tie retains the
screened schedule.

\subsection{Direct replay and calibrated reuse}

All results reported in the main paper, including the 1.54\% mean fluid gap,
use direct per-target replay. Direct experiments construct
$\mathcal Z(\boldsymbol\nu)$ and screen the full set of candidate streams separately for
every target vector. Integer template solutions are cached by $\mathbf z$
whenever several targets share the same fixed shop structure; the target
still performs its own stream screening and final ECT comparison.
In calibrated-reuse experiments, a prespecified vector
$\boldsymbol\nu^{\mathrm{cal}}$ selects one stream and replay realization. That
choice is then replayed on further multiplicity vectors with the same active job
types. Because $z_i>0$ exactly when $\nu_i>0$, the progress argument used in the
main paper continues to hold, and replay proceeds until every operation of the
new target has been inserted. Calibrated reuse is supplementary only and
contributes to none of the main-paper results.

\section{Experimental protocols and comparator implementations}

This section records the common implementation choices omitted from the
condensed computational section of the main paper. All reported times are
wall-clock times and include model construction and decoding. Deterministic
case seeds are obtained by hashing the method version, data set, instance,
multiplicity profile, and, where applicable, replication index.

\subsection{TCTR and dispatching baselines}

For every target, TCTR generates the fixed-grid and adaptive batches specified
in Section~S2.1, solves the TCTR template model for each previously unseen
candidate batch with CP-SAT,
and screens the three stream orders and eight rotations directly on that
target. Each template solve has a 2-second limit and one solver worker; a
feasible incumbent is sufficient. Template solutions are cached by the pair
consisting of the fixed shop structure and batch vector, but replay scores
are not transferred between targets. The selected stream receives exactly one
ECT replay, and the better of its screened schedule and ECT schedule is
reported. Cold-start TCTR time charges every template solve needed by the
target together with all candidate replays.

FIFO-EGI orders ready operations by queue-release time, SPT-EGI by the
shortest eligible processing time of the next operation, and MWKR-EGI by
decreasing remaining minimum processing workload. All three dispatching
baselines use the same ECT machine-selection rule and global
earliest-gap insertion. Machine ties are resolved by completion time, start
time, processing time, and machine index, in that order.

\subsection{Job-indexed adaptive large-neighborhood search}

J-ALNS represents a solution by an operation sequence over explicitly indexed
job copies. It is initialized independently from the FIFO, SPT, and MWKR
sequences; every candidate is decoded with the same ECT machine selection and
earliest-gap insertion used by the dispatching baselines. The six neighborhoods
relocate a block, reverse a block, shuffle a block, exchange two blocks,
remove and randomly reinsert operations, or pull a job finishing within 2\%
of the incumbent makespan toward the front of the sequence.

Neighborhoods start with equal weights. Rewards of 8, 4, and 1 are assigned to
a new global best, a strict current-solution improvement, and an accepted
nonimproving move, respectively. Every 25 evaluations, each used weight is
updated as $0.75$ times its previous value plus $0.25$ times its mean reward,
with a floor of $0.1$. Simulated-annealing acceptance cools geometrically from
$\max\{1,0.02C_0\}$ to $\max\{0.05,0.0002C_0\}$ over the time limit. After 80
evaluations without a new global best, ruin-and-recreate restarts from the best sequence.
The broad benchmark assigns 10 seconds and one worker to each case. In the
dense fixed-composition study, the budget is the larger of 10 seconds and the measured
cold-start TCTR time. J-ALNS receives no fluid allocation, TCTR template,
stream, or incumbent.

\subsection{Hybrid genetic algorithm}

HGA uses two job-indexed chromosomes: an operation sequence and one eligible
machine-choice gene per operation. The population has 12 individuals
and retains two elites. Three initial individuals use FIFO, SPT, and MWKR
sequences with ECT machine choices; the remaining individuals combine shuffled
or mutated sequences with shortest-processing-time or random machine genes.
Tournament selection uses three contenders. Sequence crossover is a
precedence-preserving order crossover based on a random subset of job copies,
and machine genes use uniform crossover. The crossover probability is
$0.90$; sequence and machine mutation probabilities are $0.35$ and $0.60$.
After each generation, local improvement moves one critical job earlier or
changes one of its flexible machine genes. Every chromosome is decoded by
active earliest-gap insertion.

Each expanded case is run for 10 seconds under three deterministic seeds, one
worker per seed. The case result is the middle run after sorting by makespan and
then by replication index. HGA receives neither fluid information nor TCTR
information.

\subsection{Independent finite-instance CP-SAT benchmark}

The finite-instance CP-SAT model is indexed by job copies. Every
operation has common start and end variables and one optional fixed-duration
interval for each eligible machine. Exactly one interval is present,
successive operations satisfy job precedence, and each machine has a
\texttt{NoOverlap} constraint. A maximum-of-job-ends variable is minimized.
The model starts without an incumbent, objective cap, or solution hint. Each
expanded case receives 300 seconds and one solver worker; unavailable
incumbents remain missing rather than being replaced by another method's
schedule.

\subsection{Software and hardware}

TCTR and the dispatching runs use Python 3.11.5 and OR-Tools 9.14.6206.
The solver-free J-ALNS and HGA archives record Python 3.9.12 and Python 3.10.5,
respectively. The 300-second CP-SAT rerun uses Python 3.12.13 and OR-Tools
9.15.6755. All runs use the same Intel Core i7-9700F workstation with eight
logical cores and 31.9 GB RAM. For the broad benchmark, up to eight cases are
executed concurrently, while every within-case solver is restricted to one
worker.

\section{Controlled recurrence versus finite-horizon comparison}

The controlled case in Section~6.2 of the main paper has one job type,
$N=160$, and processing-time matrix
\[
    \mathbf P=
    \begin{pmatrix}
        10&4\\
        5&3\\
        9&7
    \end{pmatrix}.
\]
The optimal fluid allocation sends the first two positions entirely to
machine 2 and splits the third position as $7/8$ on machine 1 and $1/8$ on
machine 2. Both machine loads are $63/8$, so denominator clearing gives
$(y,T)=(8,63)$ and integer position counts $(0,8)$, $(0,8)$, and $(7,1)$.

At the selected all-idle recurrent boundary, six jobs wait for their third
operation. For $N=160$, the finite construction therefore uses two serial
remainder jobs, fills this six-job boundary inventory, executes 19 recurrent
periods, and drains the boundary inventory. The serial remainder costs 28,
the filling and draining pieces cost 84 in total, and the recurrent core costs
$19\times63=1197$, giving
$C_{\max}^{\mathrm{slot}}=1309$.

The single-type TCTR enumeration considers each integral candidate batch.
It selects $z=8$ (equivalently $\hat q=20$), the integer allocation above, and
a 24-action copy-stage stream at zero rotation. Direct replay gives
$C_{\max}^{\mathrm{TCTR}}=1267$.

Finite optimality is certified at 1263. Machine-load enumeration rules out
horizons below 1263 after accounting for unavoidable startup idle time. At
1263, machine 1 processes one first-position and 139 third-position operations,
while machine 2 processes the remaining counts $(159,160,21)$ without idle
time. A dynamic program whose state records these three cumulative machine-2
counts constructs a precedence-feasible stage word. Job identities are then
linked by FIFO readiness. A separate CP-SAT stage-word model fixes the
certified horizon and allocation, uses the DP word as a search hint, and
minimizes positional agreement subject to producing a different feasible word.
It constructs a second schedule with $C_{\max}=1263$; the load/startup argument,
rather than this hinted run, supplies the independent lower-bound certificate.

For the projected-state comparison, every schedule is replayed through the
event-driven representation and each post-transition state is counted. The displayed
union contains 385 vertices and 409 directed transitions. The slot-linked,
TCTR, DP, and CP-SAT schedules visit 78, 52, 61, and 229 distinct vertices,
respectively. A dominant recurrent support is extracted from the states whose
visit counts lie within one of the maximum panel count and is verified to form
a simple closed support before plotting. These records generate Figure~5 of
the main paper.

\section{Computational results and data}

\subsection{Data inventory}

The accompanying \path{supplementary_data} directory contains the exact
records used in the computational section. The four primary files in
Table~\ref{tab:data_inventory_supp} preserve every reported schedule or stage;
the remaining CSV files provide the derived profile, family-profile, ray,
paired-comparison, and solver-status summaries.

\begin{table}[htbp]
    \centering
    \caption{Primary machine-readable result files.}
    \label{tab:data_inventory_supp}
    \footnotesize
    \setlength{\tabcolsep}{4pt}
    \begin{tabular*}{\textwidth}{@{\extracolsep{\fill}}p{0.43\textwidth}rp{0.39\textwidth}@{}}
        \toprule
        File & Rows & Coverage \\
        \midrule
        \shortstack[l]{\texttt{public\_dense\_190\_}\\\texttt{case\_results.csv}}
        & 22,800 & 24 rays $\times$ 190 targets $\times$ 5 methods \\
        \shortstack[l]{\texttt{public\_multiplicity\_676\_}\\\texttt{case\_results.csv}}
        & 4,732 & 676 cases $\times$ 7 methods \\
        \shortstack[l]{\texttt{public\_multiplicity\_hga\_}\\\texttt{seed\_results.csv}}
        & 2,028 & 676 cases $\times$ 3 HGA seeds \\
        \shortstack[l]{\texttt{tctr\_component\_build\_up\_}\\\texttt{676\_case\_results.csv}}
        & 3,380 & 676 cases $\times$ 5 nested stages \\
        \bottomrule
    \end{tabular*}
\end{table}

The HGA seed file retains every stochastic replication. The HGA row in the
4,732-row case file is the casewise median makespan and also records the
selected replication, the best and worst makespans, and the across-seed
standard deviation.

The accompanying README lists the six dense-study summaries, four
expanded-benchmark summaries, and the five-row component summary, together
with their row counts and contents.

\subsection{Detailed fixed-composition scaling results}

The main paper reports family-level trends; here we retain all 24 individual
trajectories and crossover thresholds. The dense scaling study contains 12
public base instances, two fixed-composition profiles,
and 190 target operation counts per ray. The target grid uses increments of 100
from $10^2$ through $10^4$ and increments of 1,000 through $10^5$. The equal
profile has $\mathbf b=\mathbf1$ and $\mathbf r=\mathbf0$. In the
partial-growth profile, the longer half of the job types has $b_i=1,r_i=0$,
and the shorter half has $b_i=0,r_i=1$, where length is ranked by minimum
processing workload. Thus each method contributes 4,560 schedules, and the five
methods contribute 22,800 schedules in total.

For every realized vector $\boldsymbol\nu(q)=q\mathbf b+\mathbf r$, the fluid
LP is solved for the full vector and
$\operatorname{Gap}_F=(C_{\max}-\rho_F(\boldsymbol\nu(q)))/
\rho_F(\boldsymbol\nu(q))\times100\%$. Figure~\ref{fig:dense_rays_supp}
shows all 24 rays at every target. The
four columns group Barnes, Brandimarte, Dauzere, and Hurink E/R/V; each column
contains three base instances. The symmetric logarithmic vertical scale preserves
zero gaps while separating small positive values.

\begin{figure}[p]
    \centering
    \includegraphics[width=0.87\textwidth,trim=0bp 0bp 0bp 1bp,clip]{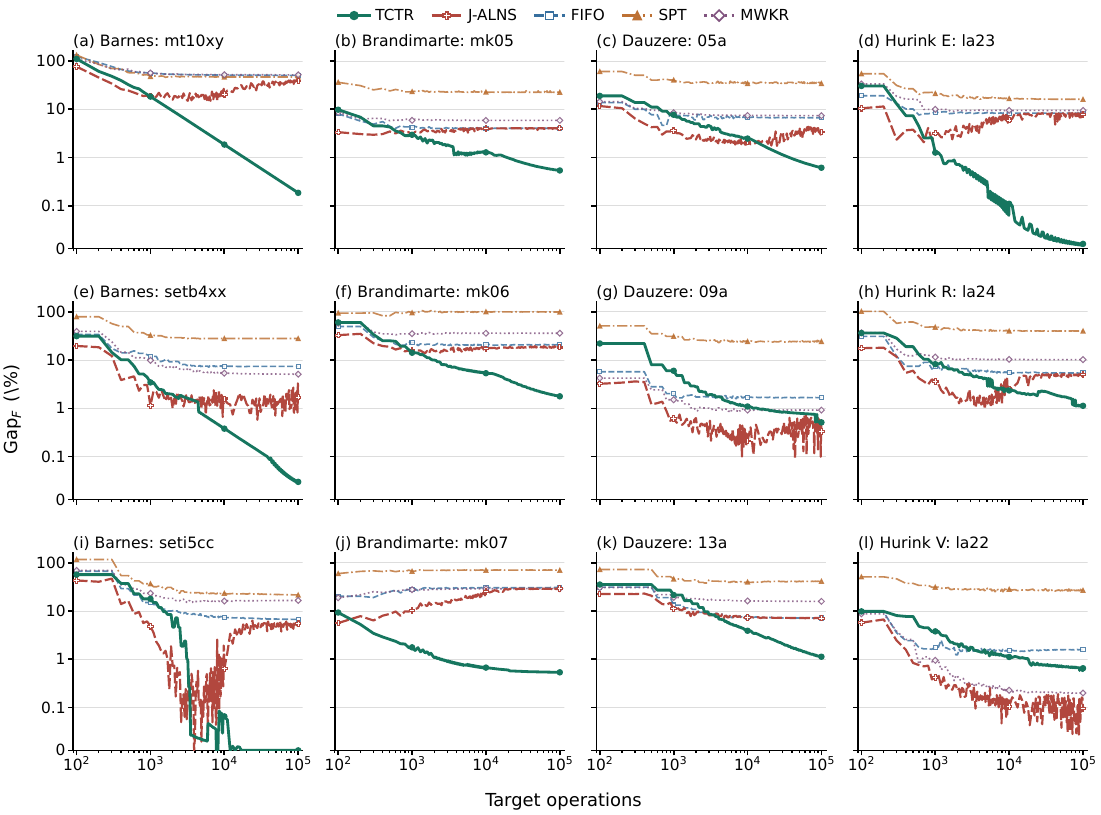}
    \vspace{-2mm}

    \includegraphics[width=0.87\textwidth,trim=0bp 0bp 0bp 1bp,clip]{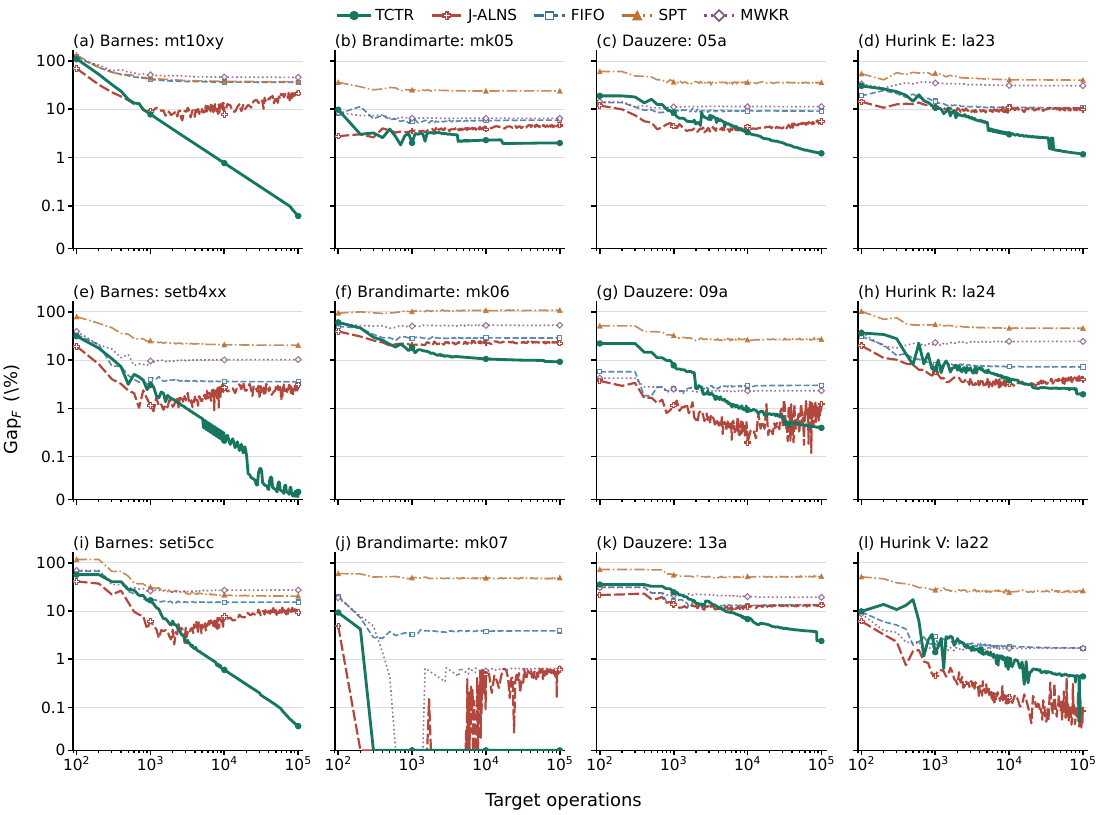}
    \setlength{\abovecaptionskip}{4pt}
    \caption{Per-base-instance fluid-gap trajectories for the equal (top) and
    partial-growth (bottom) profiles.}
    \label{fig:dense_rays_supp}
\end{figure}

\clearpage
\begin{table}[htbp]
    \centering
    \caption{Per-ray TCTR gaps (\%) and the first target from which its gap
    remains lowest among the five methods.}
    \label{tab:dense_ray_summary_supp}
    \scriptsize
    \setlength{\tabcolsep}{3.5pt}
    \begin{tabular*}{\textwidth}{@{\extracolsep{\fill}}llrrrrr@{}}
        \toprule
        Profile & Base instance & $10^2$ & $10^3$ & $10^4$ & $10^5$ & Lowest from \\
        \midrule
        Equal & Barnes: mt10xy & 110.40 & 18.27 & 1.85 & 0.19 & 1{,}000 \\
Equal & Barnes: setb4xx & 31.32 & 3.44 & 0.38 & 0.04 & 4{,}300 \\
Equal & Barnes: seti5cc & 56.64 & 17.82 & 0.08 & 0.00 & 8{,}600 \\
Equal & Brandimarte: mk05 & 9.77 & 2.96 & 1.28 & 0.54 & 700 \\
Equal & Brandimarte: mk06 & 60.31 & 14.21 & 5.34 & 1.77 & 1{,}000 \\
Equal & Brandimarte: mk07 & 9.27 & 1.76 & 0.67 & 0.54 & 200 \\
Equal & Dauzere: 05a & 18.88 & 7.66 & 2.46 & 0.61 & 14{,}000 \\
Equal & Dauzere: 09a & 22.10 & 6.00 & 1.09 & 0.51 & -- \\
Equal & Dauzere: 13a & 35.28 & 21.42 & 3.89 & 1.13 & 3{,}700 \\
Equal & Hurink E: la23 & 30.35 & 1.26 & 0.11 & 0.01 & 1{,}000 \\
Equal & Hurink R: la24 & 36.40 & 8.23 & 2.35 & 1.13 & 10{,}000 \\
Equal & Hurink V: la22 & 9.81 & 3.78 & 1.11 & 0.65 & -- \\
\addlinespace[3pt]
Partial & Barnes: mt10xy & 110.40 & 7.81 & 0.76 & 0.08 & 900 \\
Partial & Barnes: setb4xx & 31.32 & 3.05 & 0.21 & 0.02 & 2{,}500 \\
Partial & Barnes: seti5cc & 56.64 & 16.64 & 0.60 & 0.06 & 3{,}000 \\
Partial & Brandimarte: mk05 & 9.77 & 2.00 & 2.29 & 2.00 & 600 \\
Partial & Brandimarte: mk06 & 60.31 & 18.12 & 10.54 & 9.20 & 600 \\
Partial & Brandimarte: mk07 & 9.27 & 0.00 & 0.00 & 0.00 & 300 \\
Partial & Dauzere: 05a & 18.88 & 8.61 & 3.32 & 1.22 & 7{,}700 \\
Partial & Dauzere: 09a & 22.10 & 7.79 & 0.93 & 0.39 & 74{,}000 \\
Partial & Dauzere: 13a & 35.28 & 24.32 & 6.80 & 2.39 & 4{,}600 \\
Partial & Hurink E: la23 & 30.35 & 10.80 & 3.01 & 1.17 & 1{,}600 \\
Partial & Hurink R: la24 & 36.40 & 6.50 & 3.78 & 1.95 & 18{,}000 \\
Partial & Hurink V: la22 & 9.81 & 1.40 & 0.97 & 0.44 & -- \\
\bottomrule

\end{tabular*}
\end{table}

\subsection{Contribution of TCTR components}

The main paper reports the headline marginal effects; here the component file
and Figure~\ref{fig:tctr_component_build_up_supp} provide their family-resolved
distributions and cumulative runtimes. The file reports core replay, gap
insertion, stream diversification,
adaptive batches, and full TCTR for each of the same 676 cases. Its 3,380 rows
retain the selected batch, order, rotation, machine-selection result, makespan, fluid gap,
incremental reduction, and cold-start runtime at every stage. The accompanying
five-row summary reproduces the aggregates in the main paper.

The median cumulative cold-start times across the five stages are 0.31, 0.50,
5.75, 9.31, and 9.38 seconds. Figure~\ref{fig:tctr_component_build_up_supp}
shows the family-level gap trajectories and the distributions of positive
casewise reductions that underlie the coverage and conditional-median entries
in Table~2 of the main paper.

\begin{figure}[!htbp]
    \centering
    \includegraphics[width=0.82\textwidth,trim=3bp 4bp 4bp 4bp,clip]{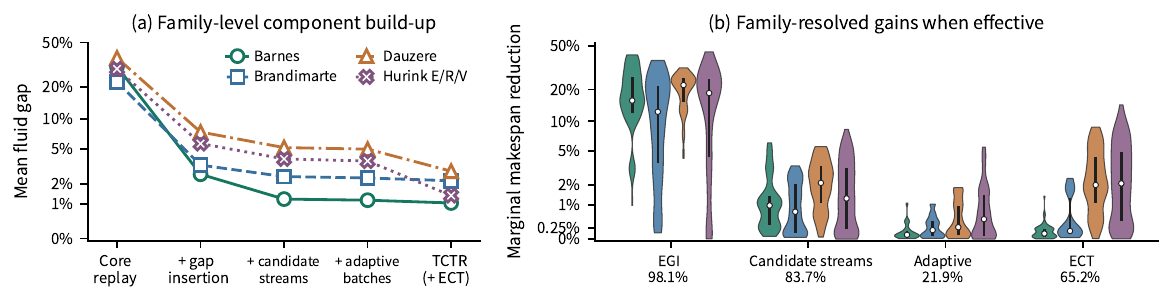}
    \setlength{\abovecaptionskip}{4pt}
    \caption{Nested TCTR component build-up. (a) Family mean fluid gaps.
    (b) Casewise makespan reductions conditional on strict improvement;
    points and bars show medians and interquartile ranges, and labels give
    overall improvement coverage. Both vertical axes use log-spaced tick
    positions.}
    \label{fig:tctr_component_build_up_supp}
\end{figure}

\clearpage
\begin{landscape}
\subsection{Broad finite-horizon benchmark}

The 676 expanded cases cross 169 public base instances with four multiplicity
profiles. Table~\ref{tab:expanded_case_details_supp} gives the case-level
results underlying Table~1 of the main paper. The corresponding 4,732-row CSV
also records exact multiplicity vectors, runtimes, TCTR replay metadata,
J-ALNS search statistics and seeds, HGA case medians and search statistics,
and CP-SAT bounds and statuses. The separate 2,028-row HGA file retains all
three seed-level runs per case.

\singlespacing
\scriptsize
\setlength{\tabcolsep}{2.6pt}

\end{landscape}
\onehalfspacing